\numberwithin{equation}{section}
\newtheorem{theorem}{Theorem}[section]
\newtheorem{corollary}[theorem]{Corollary}
\newtheorem{definition}[theorem]{Definition}
\newtheorem{example}[theorem]{Example}
\newtheorem{lemma}[theorem]{Lemma}
\newtheorem{proposition}[theorem]{Proposition}
\newtheorem{remark}[theorem]{Remark}
\newcommand{\B}{\mathcal{B}}
\newcommand{\A}{\mathcal{A}}
\begin{document}

\title[Branching Multiplicity Spaces]{A Basis for the Symplectic Group Branching Algebra}
\author{Sangjib Kim}
\email{skim@maths.uq.edu.au}
\address{School of Mathematics and Physics, The University of Queensland, Brisbane, QLD 4072, Australia}
\author{Oded Yacobi}
\email{oyacobi@math.toronto.edu}
\address{ Department of Mathematics, University of Toronto, 40 St. George Street, Toronto, Ontario, Canada, M5S-2E4}

\subjclass[2000]{20G05, 05E15}

\begin{abstract}
The symplectic group branching algebra, $\B$, is a graded algebra whose
components encode the multiplicities of irreducible representations
of $Sp_{2n-2}(\mathbb{C})$ in each finite-dimensional irreducible representation of
$Sp_{2n}(\mathbb{C})$.  By describing on $\B$ an ASL structure, we
construct an explicit standard monomial basis of $\B$ consisting of
$Sp_{2n-2}(\mathbb{C})$ highest weight vectors.  Moreover, $\B$ is known to carry
a canonical action of the $n$-fold product $SL_{2} \times \cdots \times
SL_{2}$, and we show that the standard monomial basis is the unique
(up to scalar) weight basis associated to this representation.
Finally, using the theory of Hibi algebras we describe a deformation
of $Spec(\B)$ into an explicitly described toric variety.
\end{abstract}

\subjclass[2000]{20G05, 05E15}
\keywords{Symplectic groups, Branching rules, Hibi algebra, Algebra with
straightening law}
\maketitle

\section{Introduction}

Let us consider a pair of complex reductive algebraic groups $G$ and $H$
with embedding $H\subset G$, and irreducible representations $V_{G}$
and $V_{H}$ of $G$ and $H$, respectively. A description of the
multiplicity of $V_{H}$ in $V_{G}$ regarded as a representation of
$H$ by restriction is called a \textit{branching rule} for $(G,H)$.
By Schur's lemma, the \textit{(branching) multiplicity space}, $
Hom_{{H}}(V_{H},V_{G}),$ encodes the branching rule.

In this paper, we study the branching multiplicity spaces for the symplectic
group ${Sp}_{2n}(\mathbb{C})$ of rank $n$ down to the symplectic group 
${Sp}_{2n-2}(\mathbb{C})$\ of rank $n-1$ by investigating the associated
\textit{branching algebra}. The branching algebra is a
subalgebra of the ring of regular functions over
${Sp}_{2n}(\mathbb{C})$.  Moreover, it is graded, having the branching multiplicity spaces as
its graded components:
\begin{equation*}
\B \cong \bigoplus_{\left( D,F\right) \in \Lambda _{n-1,n}}%
Hom_{{Sp}_{n-1}}(\tau _{n-1}^{D},\tau _{n}^{F}).
\end{equation*}%
Here $D$ and $F$ run over highest weights for $Sp_{2n-2}(\mathbb{C})$
and $Sp_{2n}(\mathbb{C})$ respectively, and $\tau_{2n-2}^{D}$ and
$\tau_{n}^{F}$ are the associated finite-dimensional irreducible representations.

\medskip

Branching rules for $(Sp_{2n}(\mathbb{C}), Sp_{2n-2}(\mathbb{C}))$, especially their combinatorial aspects, 
are well known (e.g., \cite{Le71, Ki71, Ki75, KT87, KT90, Pr94}). The main goal of this paper is to investigate the branching algebra $\B$ which governs the branching of symplectic groups.  Our main results are several-fold.  Firstly, we describe on $\B$ an algebra with straightening law (ASL) structure which presents $\B$ simply in terms of generators and relations.  Secondly, we show that this ASL structure is compatible with a canonical weight basis of $\B$ coming from a ``hidden symmetry'' of $\B$, namely an action of the n-fold product of $SL_2$ which acts irreducibly on the graded components of $\B$.  Finally, we unify these algebraic results with previously known combinatorial rules governing the branching of symplectic groups via a toric degeneration of B into an explicitly described toric variety.

\medskip

In Section 2, we construct the branching algebra $\B$ and review branching rules 
for $(Sp_{2n}(\mathbb{C}), Sp_{2n-2}(\mathbb{C}))$. Then in Section 3, we study 
$\B$ from the perspective of an ASL over a distributive lattice. Our first result shows that $\B$
has a natural standard monomial basis which satisfies simple straightening relations.
As a corollary we obtain a finite presentation of $\B$ in terms of generators
and relations.

Then, in Section 4, we recall a theorem from \cite{Yac09} which
shows that the natural $SL_{2}$ action on $\B$ can be canonically
extended to an action of
$$
L=SL_{2} \times \cdots \times SL_{2} \text{ (}n \text{ copies)}
$$
in such a way that each multiplicity space
$Hom_{{Sp_{2n-2}(\mathbb{C})}}(\tau_{n-1}^{D},\tau _{n}^{F})$ is an irreducible
$L$-module.  In particular, this theorem describes a canonical
decomposition of $\B$ into one-dimensional spaces.
Our second result shows this decomposition is compatible with the ASL structure on $\B$.
In other words, the standard monomial basis is the unique (up to scalar)
weight basis for the action of $L$ on $\B$.

In Section 5, we show that $\B$ can be flatly deformed into a
Hibi algebra, and, as a corollary, that $Spec(\B)$ is a deformation
of an explicitly described toric variety.  In particular, this connects our
enumeration of standard monomials with the more common description
of branching rules using diagrams of interlacing weights.

In future work we will apply these results to study properties of
the canonical weight basis for irreducible representations of the
symplectic group arising from this work.

\medskip

\noindent {\bf Acknowledgment.}  We express our sincere thanks to Roger Howe for drawing our attention to this project.  We also thank the two anonymous referees for their careful reading and  many helpful comments and suggestions which greatly improved the paper.

\medskip

\section{Branching algebra for $\left( {Sp}_{2n}(\mathbb{C}),{Sp}_{2n-2}(\mathbb{C})\right) $}
In this section we introduce our main object of study, the branching algebra for the pair
$( Sp_{2n}(\mathbb{C}), Sp_{2n-2}(\mathbb{C}))$.

\subsection{Symplectic Groups}

Let $Q_{n}=(q_{a,b})$ be the $n\times n$ matrix with $q_{a,n+1-a}=1$ for $%
1\leq a\leq n$ and $0$ otherwise. Then we define the symplectic group
$${G}_{n}=Sp(\mathbb{C}^{2n},Q_{n})$$
of rank $n$ as the subgroup of the general linear
group ${GL}(2n,\mathbb{C})$ preserving the skew symmetric bilinear form on
$\mathbb{C}^{2n}$ induced by
\begin{equation*}
\left[
\begin{array}{cc}
0 & Q_{n} \\
-Q_{n} & 0%
\end{array}%
\right] .
\end{equation*}%

Let $\{e_{a}\}$ be the elementary basis of $\mathbb{C}^{2n}$, and denote by $\{e_{a},e_{\overline{a}}\}$
the isotropic pairs, where $e_{\overline{a}%
}=e_{2n+1-a}$ for $1\leq a\leq n$. With respect to this bilinear form, we can take the subgroup $U_{n}$ of $G_n$ consisting of upper triangular matrices with $1$'s on the diagonal as a maximal unipotent subgroup of ${G}_{n}$. We let $U_{n}^{-}$ denote the subgroup of lower triangular matrices with $1$'s on the diagonal.

Let us identify ${G}_{n-1}$ with the subgroup of ${G}_{n}$ which acts as identity on the isotropic space spanned by $\{ e_{n}, e_{\overline{n}}$ \}. Then ${G}_{n-1}$ can be embedded in ${G}_{n}$ via%
\begin{equation}
\left[
\begin{array}{cc}
A & B \\
C & D%
\end{array}%
\right] \mapsto \left[
\begin{array}{ccc}
A & 0 & B \\
0 & I & 0 \\
C & 0 & D%
\end{array}%
\right]   \label{embedding}
\end{equation}%
where $A,B,C,D$ are $(n-1)\times (n-1)$ matrices, $I$ is the $2\times 2$
identity matrix, and $0$'s are the zero matrices of proper sizes.

\medskip

A \textit{Young diagram} is a finite left-justified array of boxes with weakly decreasing row lengths, such as
\begin{equation*}
\young(\ \ \ \ \ \ ,\ \ \ \ ,\ \ ,\ )
\end{equation*}%
We shall identify a Young diagram $F$ with its sequence of row lengths $(f_1,f_2,\dots)$. By reading column lengths of $F$, we obtain its associated Young diagram $F^{t}$ called the \textit{transpose} of $F$. We write $\ell (F)$ for the number of non-zero entries in $F$ and call it the \textit{length} of $F$. The Young diagram in the above example is $(6,4,2,1)$ or equivalently $(6,4,2,1,0,\dots)$ and $\ell (F)=4$. Its transpose $F^{t}$ is $(4,3,2,2,1,1)$.

Recall that every finite-dimensional irreducible representation of ${G}_{n}$ can be
uniquely labeled by a Young diagram with less than or equal to $n$ rows by
identifying its highest weight $\mu _{F}$ with Young diagram $%
F=(f_{1},\cdots ,f_{n})$:%
\begin{equation*}
\mu _{F}(t)=t_{1}^{f_{1}}\cdots t_{n}^{f_{n}}.
\end{equation*}%
Here $t$ is an element of the maximal torus ${T}_{n}$ of ${G}_{n}$%
\begin{equation*}
{T}_{n}=\left\{ diag(t_{1},\cdots ,t_{n},t_{n}^{-1},\cdots
,t_{1}^{-1})\right\}
\end{equation*}%
and $F\in \mathbb{Z}^{n}$ with $f_{1}\geq \cdots \geq f_{n}\geq 0$. See \cite[Section 3.2.1]{GW09} for details. We let $\tau _{n}^{F}$ denote the irreducible
representation of ${G}_{n}$ labeled by Young diagram $F$.

\medskip

To encode the branching multiplicities of $\tau _{n-1}^{D}$ in $\tau _{n}^{F}
$ for all pairs of Young diagrams $(D,F)$, we will use the following
semigroup%
\begin{equation*}
\Lambda _{n-1,n}=\{(D,F) \in \mathbb{Z}_{\geq 0}^{n-1} \times \mathbb{Z}_{\geq 0}^n :\ell (D)\leq n-1,\ell (F)\leq n\}
\end{equation*}%
and construct an algebra graded by $\Lambda _{n-1,n}$. The semigroup structure of $\Lambda _{n-1,n}$ is induced by the natural embedding of $ \Lambda _{n-1,n}$ in $ \mathbb{Z}_{\geq 0}^{n-1} \times \mathbb{Z}_{\geq 0}^n$ with addition.

\medskip

\subsection{Branching Algebra $\B$}

On the ring $\mathcal{R}({G}_{n})$ of regular functions over ${G}_{n}$, we
have the natural action of ${G}_{n}\times {G}_{n}$ given by%
\begin{equation}
\left( (g_{1},g_{2})\cdot f\right) (x)=f(g_{1}^{-1}xg_{2})
\label{double action}
\end{equation}%
for $f\in \mathcal{R}({G}_{n})$ and $\left( g_{1},g_{2}\right) \in {G}%
_{n}\times {G}_{n}$. With respect to this action, let us consider the
affine quotient of ${G}_{n}$ by ${U}_{n}^{-}\times 1$.

\begin{lemma}[{\protect\cite[Theorem 12.1.5]{GW09}}]
As a $G_{n}$-module under right translation the $\left( {U}_{n}^{-}\times 1\right) $-invariant subalgebra of\ $\mathcal{R%
}({G}_{n})$ contains every irreducible rational representation of ${G}_{n}$ with multiplicity one:
\begin{equation}
\mathcal{R}({G}_{n})^{{U}_{n}^{-}\times 1}=\bigoplus_{F\in \Lambda
_{n}}\tau _{n}^{F}  \label{flag algebra}
\end{equation}%
The algebra $\mathcal{R}({G}_{n})^{{U}_{n}^{-}\times 1}$ is graded by the semigroup of dominant weights for ${G}_{n}$ or
equivalently the set $\Lambda _{n}$ of Young diagrams of length less than or
equal to $n$.
\end{lemma}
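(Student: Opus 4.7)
The plan is to deduce the lemma from the algebraic Peter--Weyl theorem for the reductive group $G_n$. This is a standard argument, and the heart of it is the identification of $U_n^-$-invariants in an irreducible representation with a one-dimensional lowest weight line.

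First I would recall the $(G_n \times G_n)$-equivariant isomorphism
\begin{equation*}
\mathcal{R}(G_n) \;\cong\; \bigoplus_{F \in \Lambda_n} (\tau_n^F)^{\ast} \otimes \tau_n^F,
\end{equation*}
where the left factor corresponds to left translation and the right factor to right translation. Since the $(U_n^- \times 1)$-action only affects the left tensor factor, passing to invariants gives
\begin{equation*}
\mathcal{R}(G_n)^{U_n^- \times 1} \;\cong\; \bigoplus_{F \in \Lambda_n} \bigl((\tau_n^F)^{\ast}\bigr)^{U_n^-} \otimes \tau_n^F
\end{equation*}
as a $G_n$-module under right translation.

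Next I would argue that $\bigl((\tau_n^F)^{\ast}\bigr)^{U_n^-}$ is one-dimensional. Indeed, for any irreducible rational $G_n$-representation $V$, the subspace $V^{U_n^-}$ is exactly the lowest weight line with respect to the Borel subgroup $B_n = T_n U_n$: this follows because $V$ is generated from its lowest weight vector under $U_n$, and the weights of $V$ that are fixed by all of $U_n^-$ must be minimal in the standard partial order. Applying this to $(\tau_n^F)^{\ast}$ produces a one-dimensional space, so each $\tau_n^F$ appears in $\mathcal{R}(G_n)^{U_n^- \times 1}$ with multiplicity exactly one.

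Finally, I would verify the grading claim. The torus $T_n$ sits inside the normalizer of $U_n^-$, so left translation by $T_n$ acts on $\mathcal{R}(G_n)^{U_n^- \times 1}$ and commutes with right translation by $G_n$. The weight of the one-dimensional space $\bigl((\tau_n^F)^{\ast}\bigr)^{U_n^-}$ under this $T_n$-action is $\mu_F$ (up to a sign convention), so the $\mu_F$-weight component equals the summand $\tau_n^F$. This exhibits $\mathcal{R}(G_n)^{U_n^- \times 1}$ as graded by the semigroup $\Lambda_n$ of dominant weights, with the $F$-graded piece isomorphic to $\tau_n^F$. The only subtlety is keeping track of the Borel convention (lowest versus highest weight, and whether one identifies $F$ with the weight on the left or on the right), but once the convention is fixed, no further obstacle appears.
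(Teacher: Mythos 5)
Your proof is correct and is essentially the standard argument: the paper offers no proof of this lemma, citing it directly from \cite[Theorem 12.1.5]{GW09}, and the route you take --- the algebraic Peter--Weyl decomposition $\mathcal{R}(G_n)\cong\bigoplus_F(\tau_n^F)^{\ast}\otimes\tau_n^F$, the one-dimensionality of $U_n^-$-invariants in an irreducible (the lowest weight line), and the left $T_n$-weight giving the $\Lambda_n$-grading --- is exactly the argument behind the cited result. Your parenthetical care about the sign convention is warranted: the paper itself records that $\tau_n^F$ sits in the left $T_n$-weight space of weight $\mu_{(-F)}$, consistent with your identification of the invariant line in $(\tau_n^F)^{\ast}$.
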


In this setting, the irreducible representation $\tau _{n}^{F}$ is the
weight space of $\mathcal{R}({G}_{n})^{{U}_{n}^{-} \times 1}$ under the left action of
the maximal torus ${T}_{n}$ with weight $\mu _{(-F)}$, i.e.,%
\begin{equation*}
t\cdot f=\left( t_{1}^{-f_{1}}\cdots t_{n}^{-f_{n}}\right) f
\end{equation*}%
for $f\in \tau _{n}^{F}$ and $t\in T_{n}$. See \cite[Section 12.1.3]{GW09} for
further details.

\medskip

By highest weight theory (e.g., \cite[Section 3.2.1]{GW09}) the subspace of $\tau _{n}^{F}$ invariant under the
maximal unipotent subgroup of ${G}_{n-1}$ is spanned by highest weight
vectors of ${G}_{n-1}$-irreducible representations in $\tau _{n}^{F}$.
Therefore, the ${U}_{n-1}$-invariant subalgebra of $\mathcal{R}({G}_{n})^{{U%
}_{n}^{-}\times 1}$ contains the information of the branching multiplicities
for $({G}_{n},{G}_{n-1})$. That is,

\begin{proposition}
\label{branchingalgprop}
The $\left( {U}_{n}^{-}\times {U}_{n-1}\right) $-invariant subalgebra of the
ring $\mathcal{R}({G}_{n})$ decomposes as
\begin{eqnarray*}
&&\mathcal{R}({G}_{n})^{{U}_{n}^{-}\times {U}_{n-1}}
=\bigoplus_{(D,F) \in \Lambda _{n-1,n}}Hom_{{G}%
_{n-1}}(\tau _{n-1}^{D},\tau _{n}^{F})\otimes \left( \tau _{n-1}^{D}\right)
^{{U}_{n-1}}
\end{eqnarray*}
\end{proposition}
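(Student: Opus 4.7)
The plan is to obtain the decomposition by applying the preceding lemma and then taking invariants under the right action of $U_{n-1}$. Since the left action of $U_n^- \times 1$ commutes with the right action of $1 \times G_{n-1}$ on $\mathcal{R}(G_n)$, the subspace of right $U_{n-1}$-invariants of each $G_n$-isotypic summand in (\ref{flag algebra}) is well-defined, and
\begin{equation*}
\mathcal{R}(G_n)^{U_n^- \times U_{n-1}} = \bigl( \mathcal{R}(G_n)^{U_n^- \times 1} \bigr)^{1 \times U_{n-1}} \cong \bigoplus_{F \in \Lambda_n} (\tau_n^F)^{U_{n-1}},
\end{equation*}
where $(\tau_n^F)^{U_{n-1}}$ denotes the space of vectors fixed by $U_{n-1} \subset G_{n-1} \subset G_n$ acting on $\tau_n^F$.

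Next I would restrict $\tau_n^F$ to $G_{n-1}$ via the embedding (\ref{embedding}) and apply complete reducibility to write
\begin{equation*}
\tau_n^F \big|_{G_{n-1}} \;\cong\; \bigoplus_{D} \mathrm{Hom}_{G_{n-1}}(\tau_{n-1}^D, \tau_n^F) \otimes \tau_{n-1}^D,
\end{equation*}
where $D$ ranges over Young diagrams of length at most $n-1$ and $\mathrm{Hom}_{G_{n-1}}(\tau_{n-1}^D, \tau_n^F)$ is the branching multiplicity space on which $G_{n-1}$ (in particular $U_{n-1}$) acts trivially. Passing to $U_{n-1}$-invariants commutes with the direct sum and with tensoring by the multiplicity space, yielding
\begin{equation*}
(\tau_n^F)^{U_{n-1}} \;\cong\; \bigoplus_{D} \mathrm{Hom}_{G_{n-1}}(\tau_{n-1}^D, \tau_n^F) \otimes (\tau_{n-1}^D)^{U_{n-1}}.
\end{equation*}
By highest weight theory, each factor $(\tau_{n-1}^D)^{U_{n-1}}$ is one-dimensional, spanned by the highest weight vector of $\tau_{n-1}^D$, which is why the $U_{n-1}$-fixed subspace of $\tau_n^F$ encodes exactly the $G_{n-1}$-highest weight vectors.

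Combining the two displays and re-indexing the double sum by the semigroup $\Lambda_{n-1,n}$ (which is precisely the set of pairs $(D,F)$ for which the multiplicity space can be non-zero, namely $\ell(D) \leq n-1$ and $\ell(F) \leq n$) gives the claimed decomposition. There is no real obstacle in this proof: everything reduces to combining the lemma with the standard fact that $(V)^{U} \cong \bigoplus_\lambda \mathrm{Hom}(V_\lambda,V) \otimes (V_\lambda)^U$ for a reductive group with maximal unipotent $U$. The only point demanding a brief verification is that the two commuting actions really do permit us to take invariants in stages, which follows from the fact that left and right multiplication on $G_n$ commute.
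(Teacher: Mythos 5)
Your proposal is correct and follows essentially the same route as the paper, which likewise takes $U_{n-1}$-invariants of the decomposition in the preceding lemma and invokes highest weight theory to identify $(\tau_n^F)^{U_{n-1}}$ with $\bigoplus_D \mathrm{Hom}_{G_{n-1}}(\tau_{n-1}^D,\tau_n^F)\otimes(\tau_{n-1}^D)^{U_{n-1}}$. The paper gives only a brief sketch of this argument; your write-up fills in the same steps in more detail.
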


Note that by Schur's lemma the dimension of $Hom_{{G}_{n-1}}(\tau
_{n-1}^{D},\tau _{n}^{F})$ is equal to the multiplicity of $\tau _{n-1}^{D}$
in $\tau _{n}^{F}$. 
\begin{definition} 
We call $\mathcal{R}({G}_{n})^{{U}%
_{n}^{-}\times {U}_{n-1}}$ the \textit{branching algebra} for $({G}_{n},{G}%
_{n-1})$ and denote it by%
\begin{equation*}
\B=\mathcal{R}({G}_{n})^{{U}_{n}^{-}\times {U}_{n-1}}
\end{equation*}
\end{definition}
The algebra $\B$ has an action of $T_{n} \times T_{n-1}$, and the
weight space corresponding to the $T_{n}$ weight $(-F)$ and $T_{n-1}$ weight
$D$ is precisely the component 
$$
\B(D,F):=Hom_{{G}_{n-1}}(\tau _{n-1}^{D},\tau _{n}^{F})
\otimes \left( \tau _{n-1}^{D}\right)^{U_{n-1}}
$$ 
appearing in Proposition \ref{branchingalgprop}. Since the dimension of $(\tau _{n-1}^{D})^{U_{n-1}}$ is one, we can consider $\B(D,F)$ as the branching multiplicity space for $(G_n,G_{n-1})$:%
\begin{equation}\label{B-DF-mult}
\B(D,F) \cong Hom_{{G}_{n-1}}(\tau _{n-1}^{D},\tau _{n}^{F})
\end{equation}
Therefore the dimension of $\B(D,F)$ is exactly the multiplicity of the irreducible representation $\tau _{n-1}^{D}$ appearing in $\tau _{n}^{F}$.
Moreover, by keeping track of  $T_{n} \times T_{n-1}$ weights, it is straightforward to check that this defines a $\Lambda_{n-1,n}$-graded algebra structure on $\B$:
$$
\B = \bigoplus_{(D,F) \in \Lambda _{n-1,n}}\B(D,F).
$$

The dimensions of the graded components of $\B$ are given by the following combinatorial rule.
For two Young diagrams $F=(f_{1},f_{2},\cdots )$ and $D=(d_{1},d_{2},\cdots
) $, we say $D$ \textit{interlaces} $F$ and write $D\sqsubseteq F$, if 
$f_{i}\geq d_{i}\geq f_{i+1}$ for all $i$.

\begin{lemma}\label{G_branching}
\begin{enumerate}
\item (See, e.g., \cite[Theorem 8.1.5]{GW09})
For Young diagrams $D$ and $F$ with $(D,F)\in \Lambda
_{n-1,n}$, the multiplicity of $\tau _{n-1}^{D}$ in $\tau _{n}^{F}$ as a ${%
G}_{n-1}$ representation is nonzero if and only if
$$f_j \geq d_j \geq f_{j+2}$$
for $j=1,2,\cdots,n-1$. Here we assume $f_{n+1}=0$.

\item (See, e.g., \cite[Proposition 10.2]{Pr94})
The multiplicity of $\tau _{n-1}^{D}$ in $\tau _{n}^{F}$ as a ${G}_{n-1}$ representation is
equal to the number of Young diagrams $E=(e_1,\dots,e_n)$
satisfying the conditions $D\sqsubseteq E$ and $E\sqsubseteq F$, i.e., 
\begin{eqnarray*}
e_1 \geq d_1 \geq e_2 \geq \cdots \geq e_{n-1} \geq d_{n-1} \geq e_n ; \\
f_1 \geq e_1 \geq f_2 \geq e_2 \geq \cdots \geq e_{n-1} \geq f_{n} \geq e_n.
\end{eqnarray*}
\end{enumerate}
\end{lemma}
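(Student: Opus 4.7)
Both parts of the lemma are classical branching rules for $(Sp_{2n}, Sp_{2n-2})$ cited with references in the excerpt, so my plan is to indicate the route I would take to reconstruct them, proving part (2) first and then deducing part (1) as a straightforward corollary.

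For part (2), the cleanest route is Proctor's combinatorial argument in \cite{Pr94}, which can be recast in terms of symplectic Gelfand-Tsetlin patterns. A GT pattern for the chain $G_n \supset G_{n-1}$ with top shape $F$ and ``second-level'' shape $D$ contains an intermediate ``half-step'' row $E$ of length $n$ sitting between $F$ and $D$, subject to $E \sqsubseteq F$ and $D \sqsubseteq E$. Such patterns index a weight basis of $\tau_n^F$ adapted to the restriction to $G_{n-1}$, and the subset of patterns with fixed $(F,D)$ is in bijection with a basis of the multiplicity space $Hom_{G_{n-1}}(\tau_{n-1}^D,\tau_n^F)$. Counting these patterns by enumerating the admissible $E$ gives the claimed formula.

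For part (1), I would use part (2) and observe that the set of admissible intermediate diagrams $E$ is nonempty if and only if $f_j \geq d_j \geq f_{j+2}$ for every $j$. Necessity is immediate from the defining inequalities $d_j \leq e_j \leq f_j$ and $f_{j+2} \leq e_{j+1} \leq d_j$. Sufficiency follows by exhibiting the explicit choice $e_i := \max(f_{i+1}, d_i)$, which under the stated hypotheses yields a valid Young diagram interlacing both $D$ and $F$ in the required senses.

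The main obstacle is the bijection underlying part (2), namely establishing that the ``half-step'' rows $E$ actually parametrize a weight basis of each multiplicity space. This requires a nontrivial construction specific to the type $C_n$ root system and is not formal. It is proved in \cite{GW09} via the Weyl character formula together with the type $C$ denominator identity, and combinatorially in \cite{Pr94} using King tableaux; I would adopt one of these established arguments rather than attempting a direct character computation from scratch.
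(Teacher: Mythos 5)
The paper offers no proof of this lemma at all---it is stated as a classical result with pointers to \cite[Theorem 8.1.5]{GW09} and \cite[Proposition 10.2]{Pr94}---and your proposal likewise defers the one substantive ingredient (that the intermediate diagrams $E$ with $D\sqsubseteq E\sqsubseteq F$ parametrize a basis of the multiplicity space) to those same references, so the approaches coincide. Your additional derivation of part (1) from part (2), with necessity read off from $d_j\le e_j\le f_j$ and $f_{j+2}\le e_{j+1}\le d_j$ and sufficiency witnessed by $e_i=\max(f_{i+1},d_i)$ (and $e_n=0$), is correct and checks out.
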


If $D\sqsubseteq E$ and $E\sqsubseteq F$ for some $E$ then, we say that the pair $(D,F)$  (or the triple $(D,E,F)$, if $E$ should be specified) satisfies the \textit {doubly interlacing condition}. Note that the branching for $(G_n, G_{n-1})$ is not multiplicity free, and  $D\sqsubseteq E\sqsubseteq F$ does not imply $D\sqsubseteq F$. We also note that the conditions in the second statement can be visualized as, by using the convention of Gelfand-Tsetlin patterns, 
\begin{equation*}
\begin{array}{ccccccccccc}
f_{1} &  & f_{2} &  & \cdots &  & f_{n} &  & f_{n} &  &  \\
& e_{1} &  & e_{2} &  & \cdots &  & e_{n-1} &  & e_{n} &  \\
&  & d_{1} &  & \cdots &  & d_{n-1} &   & d_{n} & &
\end{array}
\end{equation*}
where the entries are weakly decreasing from left to right along the diagonals.

\section{A standard monomial theory for $\B$}
In this section we show that $\B$ carries a standard monomial theory, in the sense that it has a natural basis
which satisfies a straightening algorithm. For the concept of standard monomial theory and its development, we refer to \cite{LR08} and  \cite{La03, Mu03}.

\subsection{Distributive Lattice for $\B$} \label{sectionASL}

Let ${M}_{2n}={M}_{2n}(\mathbb{C})$ be the space of $2n\times 2n$ complex
matrices. For a subset $C$ of $\{1,2,\cdots ,2n\}$ of cardinality $r$, let $$\delta _{C}:M_{2n} \rightarrow \mathbb{C}$$ denote
the map assigning a matrix $X\in {M}%
_{2n}$ the determinant of the $r\times r$ minor formed by taking rows $%
\{1,2,\cdots ,r\}$ and columns $\{c_{1},c_{2},\cdots ,c_{r}\}$:

\begin{equation}\label{del-det}
\delta _{C}(X)=\det
\begin{bmatrix}
x_{1,c_{1}} & x_{1,c_{2}} & \cdots & x_{1,c_{r}} \\
x_{2,c_{1}} & x_{2,c_{2}} & \cdots & x_{2,c_{r}} \\
\vdots & \vdots & \ddots & \vdots \\
x_{r,c_{1}} & x_{r,c_{2}} & \cdots & x_{r,c_{r}}%
\end{bmatrix}%
\end{equation}%
for $c_{1}<c_{2}<\cdots <c_{r}$. We note that $\delta _{C}$ is a weight
vector under the left and right multiplication of the diagonal subgroup of ${%
GL}_{2n}(\mathbb{C})$, i.e.,%
\begin{equation}
(t,s)\cdot \delta _{C}=\left( t_{1}^{-1}\cdots t_{r}^{-1}\right) \left(
s_{c_{1}}\cdots s_{c_{r}}\right) \delta _{C}  \label{joint weight vector}
\end{equation}%
In particular, the weight under the left and right actions encode the size
of $C$ and the entries of $C$ respectively.

\medskip

For the branching algebra $\B$, we shall use the following
subsets of $\{1,2,\cdots ,n,n+1\}$ for column indexing sets $C$:%
\begin{eqnarray*}
I_{i} &=&\{1,2,\cdots ,i\} \\
J_{j} &=&\{1,2,\cdots ,j,n\} \\
J_{j}^{\prime } &=&\{1,2,\cdots ,j,n+1\} \\
K_{k} &=&\{1,2,\cdots ,k,n,n+1\}
\end{eqnarray*}%
for $1\leq i\leq n-1,$ $0\leq j\leq n-1$\ and $0\leq k\leq n-2$, with the
convention of $J_{0}=\{n\}$, $J_{0}^{\prime }=\{n+1\}$, and $K_{0}=\{n,n+1\}$.

\begin{definition}\label{dlattice-def}
The \textbf{distributive lattice }$\mathcal{L}$\textbf{\ for }$\left(
{G}_{n},{G}_{n-1}\right) $ is the poset consisting of
\begin{equation*}
\left\{ I_{i},J_{j},J_{j}^{\prime },K_{k}:1\leq i\leq n-1,0\leq j\leq
n-1,0\leq k\leq n-2\right\}
\end{equation*}%
with the following partial order $\preceq $:%
\begin{equation*}
\xymatrix{ & J_{i-1}^{\prime } \ar@{-}[d] & \\ & J_{i-1} & \\ I_{i}
\ar@{-}[ur]& & K_{i-1} \ar@{-}[ul]\\ & J_{i}^{\prime } \ar@{-}[ul]
\ar@{-}[d] \ar@{-}[ur]& \\ & J_{i} & \\ }
\end{equation*}%
for $1\leq i\leq n-1$.
\end{definition}

Note that the join and meet of incomparable elements can be easily found as%
\begin{eqnarray}
I_{i}\wedge K_{i-1} &=&J_{i}^{\prime }  \label{join-meet} \\
I_{i}\vee K_{i-1} &=&J_{i-1}  \notag
\end{eqnarray}%
for each $i$. This poset structure is very useful to organize the relations
among $\delta _{C}$ for $C\in \mathcal{L}$. The following can be
shown by a simple computation, or see \cite[Lemma 7.2.3]{BH98}.

\begin{proposition}\label{relations}
For $1\leq i\leq n-1$, the following identities hold%
\begin{equation*}
\delta _{I_{i}}\delta _{K_{i-1}}=\delta _{J_{i}^{\prime }}\delta
_{J_{i-1}}-\delta _{J_{i}}\delta _{J_{i-1}^{\prime }}
\end{equation*}%
over the space ${M}_{2n}$ and therefore over ${G}_{n}$.
\end{proposition}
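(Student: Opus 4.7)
The plan is to recognize the identity as a classical three-term Plücker relation on minors with a common column prefix, and to reduce its verification to a $3\times 3$ determinantal tautology. Set $P=\{1,\ldots,i-1\}$; then every column subset appearing in the statement has the form $P\cup S$ for some $S\subseteq\{i,n,n+1\}$ of size $1$ or $2$. Explicitly one reads off $I_i=P\cup\{i\}$, $J_{i-1}=P\cup\{n\}$, $J_{i-1}'=P\cup\{n+1\}$, $K_{i-1}=P\cup\{n,n+1\}$, $J_i=P\cup\{i,n\}$, and $J_i'=P\cup\{i,n+1\}$. With this bookkeeping the proposition is equivalent to
\begin{equation*}
\delta_{P\cup\{i\}}\,\delta_{P\cup\{n,n+1\}} \;-\; \delta_{P\cup\{i,n+1\}}\,\delta_{P\cup\{n\}} \;+\; \delta_{P\cup\{i,n\}}\,\delta_{P\cup\{n+1\}} \;=\; 0,
\end{equation*}
the standard three-term Plücker relation attached to the ``free'' column triple $(i,n,n+1)$ over the stem $P$. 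Since this is a polynomial identity in the matrix entries, it suffices to verify it on the generic matrix $M_{2n}$; the restriction to $G_n$ is automatic.

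To prove the displayed identity I would Laplace-expand each of the three $(i+1)\times(i+1)$ minors along the columns outside $P$. Expanding $\delta_{P\cup\{n,n+1\}}$ along the last two columns (a $2\times 2$ Laplace expansion indexed by pairs $\{p<q\}\subseteq\{1,\ldots,i+1\}$), and expanding each of $\delta_{P\cup\{i,n+1\}}$ and $\delta_{P\cup\{i,n\}}$ along its single non-$P$ column, produces expressions in which every term is a signed product of a small minor on columns drawn from $\{i,n,n+1\}$ and a matching $(i-1)\times(i-1)$ minor on columns $P$. Grouping terms by the triple of ``special'' rows $\{p,q,r\}\subseteq\{1,\ldots,i+1\}$ and factoring out the common $(i-2)\times(i-2)$ minor on columns $P$ with the complementary rows, the identity collapses to the statement that the determinant
\begin{equation*}
\det\begin{pmatrix} x_{p,i} & x_{p,n} & x_{p,n+1}\\ x_{q,i} & x_{q,n} & x_{q,n+1}\\ x_{r,i} & x_{r,n} & x_{r,n+1}\end{pmatrix}
\end{equation*}
admits matching Laplace expansions along its first column and along its last two columns, which is a tautology.

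The chief obstacle is sign bookkeeping across the three Laplace expansions, since the Laplace coefficients for $\delta_{P\cup\{n,n+1\}}$ carry signs $(-1)^{p+q}$ while those for $\delta_{P\cup\{i,n\}}$ and $\delta_{P\cup\{i,n+1\}}$ carry signs $(-1)^{p}$. My approach is to fix an ordering of $\{1,\ldots,i+1\}$ once and for all and to compute each sign relative to that fixed ordering, so that upon regrouping by row triples the signs line up with the $\mathrm{sgn}(\sigma)$ of the relevant $3\times 3$ expansion. As a shortcut, one can simply invoke \cite[Lemma 7.2.3]{BH98}, where precisely this Plücker-type identity is established for minors of a generic matrix sharing a common column prefix.
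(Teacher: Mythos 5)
Your proposal is correct and takes essentially the same route as the paper, which simply notes the identity follows by ``a simple computation'' or from \cite[Lemma 7.2.3]{BH98}; your identification of all six column sets as $P\cup S$ with $P=\{1,\dots,i-1\}$ and $S\subseteq\{i,n,n+1\}$, and the resulting signed three-term relation, are accurate. The Laplace-expansion sketch is a reasonable way to carry out that ``simple computation,'' and in any case your fallback citation to \cite[Lemma 7.2.3]{BH98} is exactly the paper's own justification.
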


Note that in the above Proposition, on the left hand side, $I_{i}$ and $K_{i-1}$ are
incomparable, while on the right hand side $J_{i}^{\prime }\preceq J_{i-1}$
and $J_{i}\preceq J_{i-1}^{\prime }$. In other words, by applying these
relations, we can express any quadratic monomial in $\{\delta _{C}:C\in \mathcal{L}\}$
 as a linear combination of quadratic monomials whose indices are
linearly ordered with respect to $\preceq $. In this sense, we call these
relations \textit{straightening relations}, and we can study the branching
algebra $\B$ in the context of an algebra with straightening
law (ASL) (cf. \cite{BH98, DEP82, Ei80}).

\begin{definition}[{\protect\cite{Ei80}}]\label{Ei-ASL}
Let $R$ be a ring, $A$ an $R$-algebra, $H$ a finite partially ordered set contained in $A$
which generates $A$ as an $R$-algebra. Then $A$ is an algebra with straightening law on $H$ over $R$, 
if
\begin{enumerate}
\item The algebra $A$ is a free $R$-module whose basis is the set of monomials of the form $\alpha_1 \cdots \alpha_k$
where $\alpha_1 \leq \dots \leq \alpha_{k}$ in $H$.

\item If $\alpha$ and $\beta$ in $H$ are incomparable, then 
$$ \alpha \beta = \sum_i c_i \gamma^{(i)}_1 \dots \gamma^{(i)}_{k_{i}}$$

where $\gamma_1^{(i)}\leq \cdots \leq \gamma_{k_i}^{(i)}$,
and,  for $i$ such that $c_i\neq 0$,  $r_1^{(i)} \leq \alpha,\beta$.
%
%
\end{enumerate}
\end{definition}
\medskip

In the following section we show the branching algebra $\B$ is an ASL on $\{\delta_C: C\in \mathcal{L}\}$ over $\mathbb{C}$.

\subsection{Standard monomials for $\B$}

Let us recall that a \textit{Young tableau} is a filling of a Young diagram with positive integers. A Young tableau is called a \textit{semistandard Young tableau}, if its entries in each row are weakly increasing from left to right, and its entries in each column are strictly increasing from top to bottom.

Young tableaux with entries from $\{1,\cdots,m \}$ may be identified with a product of determinants of minors over the space $M_{m}$ as follows. If the $i$-th column of a Young tableau $T$ contains the entries 
$${t_{1,i} < t_{2,i}< \dots <t_{r_i,i}}$$ 
for $1 \leq i \leq s$, then the corresponding polynomial in $\mathbb{C}[M_{m}]$ is
\begin{equation}\label{ssyt-det_bij}
 \delta_{T}=\prod_{1\leq i \leq s} \delta_{\{ t_{1,i},\dots,t_{r_i,i}\}}
\end{equation}%
where $\delta_{\{ t_{1,i},\dots,t_{r_i,i}\}}$ is as defined in (\ref{del-det}). See Example \ref{example} below. This type of correspondence between the set of Young tableaux and the set of products of determinants plays an important role in standard monomial theory of Grassmann and flag varieties. For this direction, we refer to, e.g., \cite{Km08, LR08, MS05}.

\medskip

Coming back to our setting, we will be considering monomials of the form $
\prod_{i=1}^s \delta_{C_i}
$
where $C_i \in \mathcal{L}$ (cf. Definition \ref{dlattice-def}).  These are considered as regular functions on $G_n$.  From (\ref{joint weight vector}), it is straightforward to see that every such monomial is a weight vector under the left and right actions of the maximal tori of ${G}_{n}$ and ${G}_{n-1}$ respectively. Moreover, by definition of the $U_n^- \times U_{n-1}$ action on $\mathcal{R}(G_n)$, the functions $\delta_C$, and hence their products $\prod_{i=1}^s \delta_{C_i}$, are invariant under $U_n^- \times U_{n-1}$.  In other words, $\prod_{i=1}^s \delta_{C_i} \in \B$. Let $\B' \subset \B$ be the subalgebra generated by $\delta_C$ for $C \in \mathcal{L}$.  Clearly, $\B'$ is spanned by monomials $\prod_{i=1}^s \delta_{C_i}$.

Now, we can form a Young tableau $T$ by concatenating finitely many elements $C_1, \dots, C_s$ chosen from $ \mathcal{L}$ allowing repetition. We further assume that the size of $C_i$ is not smaller than that of $C_{i+1}$ for all $i$. We note that the weakly increasing condition on the elements along the rows of $T$ can be replaced by the chain condition on  $ \mathcal{L}$ with respect to the partial order $\preceq$.  In other words, the elements ${C_1, \dots, C_s}$ of  $\mathcal{L}$ are columns of a semistandard tableau $T$ if and only if they are linearly ordered with respect to $\preceq$. This is true in a more general setting (cf. \cite[Remark 3.3]{Km08}). With this observation, we define standard monomials for $(G_n,G_{n-1})$ as follows.

\begin{definition}
A monomial $\prod \delta _{C_{i}}$  in $\{\delta _{C}:C\in \mathcal{L}\}$ is called a \textit{standard
monomial}  for $(G_n,G_{n-1})$ if the column indices $C_{i}$ form a multiple chain $\Delta $%
\begin{equation*}
\Delta =\left( C_{1}\preceq \cdots \preceq C_{r}\right)
\end{equation*}%
in the poset $\mathcal{L}$. We write $\delta _{\Delta }$ for $\prod
\delta _{C_{i}}$.
\end{definition}

The observation right after Proposition \ref{relations} now can be generalized in terms of standard monomials.

\begin{proposition}\label{spanning}
The set of standard monomials for $(G_n,G_{n-1})$ spans the subalgebra $\B'$ of $\B$ generated by 
$\{\delta_C : C \in \mathcal{L}\}$.
\end{proposition}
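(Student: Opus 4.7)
The plan is to argue by induction on a natural non-standardness statistic, using the straightening relation of Proposition \ref{relations} as the single reduction move. First I would inspect the Hasse diagram of $\mathcal{L}$ and confirm that its only incomparable pairs are $(I_i, K_{i-1})$ for $1 \leq i \leq n-1$: the elements $\{J_j\}$, $\{J_j'\}$ and $\{I_i\}$ assemble into a single main chain
$$J_{n-1} \preceq J_{n-1}' \preceq I_{n-1} \preceq J_{n-2} \preceq J_{n-2}' \preceq \cdots \preceq J_0 \preceq J_0',$$
into which each $K_{i-1}$ is attached as a sibling of $I_i$ between $J_i'$ and $J_{i-1}$. A short case check then confirms that every $J_j$ and every $J_j'$ is comparable to every element of $\mathcal{L}$, while each $I_i$ is comparable to every element except $K_{i-1}$.

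Next, for a monomial $m = \prod_\ell \delta_{C_\ell}$ in the generators with $C_\ell \in \mathcal{L}$, I would set
$$\nu(m) = \sum_{i=1}^{n-1} a_i(m)\, b_i(m),$$
where $a_i(m)$ and $b_i(m)$ count the occurrences of $I_i$ and $K_{i-1}$ among the $C_\ell$. The structural observation above yields $\nu(m) = 0$ if and only if the multiset $\{C_\ell\}$ forms a chain in $\mathcal{L}$, i.e., if and only if $m$ is standard. I would then induct on $\nu$. When $\nu(m) > 0$ there exists $i$ with $a_i(m), b_i(m) \geq 1$; factoring $m = \delta_{I_i}\delta_{K_{i-1}} \cdot m'$ and applying Proposition \ref{relations} gives
$$m = \delta_{J_i'}\delta_{J_{i-1}} \cdot m' \;-\; \delta_{J_i}\delta_{J_{i-1}'} \cdot m'.$$
Since $J_i, J_i', J_{i-1}, J_{i-1}'$ all lie on the main chain and none of them is of the form $I_{i'}$ or $K_{i'-1}$, the new factors do not affect the $I$- or $K$-multiplicity of any index. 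A direct count gives $\nu$ of each new monomial equal to $\nu(m) - a_i(m) - b_i(m) + 1 < \nu(m)$, so by induction each new monomial is a linear combination of standard monomials, and hence so is $m$. Since $\B'$ is spanned by such $m$, the standard monomials span $\B'$.

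The only subtle point to verify is that the straightening move does not introduce fresh incomparabilities among the remaining indices. This is guaranteed by the structural observation: the replacement indices $J_i, J_i', J_{i-1}, J_{i-1}'$ all lie on the main chain and are therefore comparable to every element of $\mathcal{L}$. Without this especially clean shape of $\mathcal{L}$ (only ``diamond'' obstructions on an otherwise linear backbone), a more delicate term order would be required to guarantee descent; here, plain counting of $(I_i,K_{i-1})$ pairs suffices.
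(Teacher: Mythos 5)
Your proof is correct and follows essentially the same route as the paper: both arguments induct on a count of the incomparable pairs $(I_i,K_{i-1})$ occurring in a monomial and use Proposition \ref{relations} as the single reduction step, relying on the fact that the replacement indices $J_i, J_i', J_{i-1}, J_{i-1}'$ are comparable to everything in $\mathcal{L}$ so that no new obstructions appear. The only cosmetic difference is your choice of statistic $\nu(m)=\sum_i a_i(m)b_i(m)$ versus the paper's factorization of all $(\delta_{I_i}\delta_{K_{i-1}})$ pairs at once followed by induction on their total number; your explicit verification that $(I_i,K_{i-1})$ are the \emph{only} incomparable pairs is a point the paper leaves implicit.
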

\begin{proof}
We want to show that every monomial can be expressed as a linear combination of standard monomials.  Observe that any monomial $\delta=\prod \delta _{C_{i}}$ can be expressed as
$$
\delta=(\delta_{I_1}\delta_{K_0})^{a_1} \cdots (\delta_{I_{n-1}}\delta_{K_{n-2}})^{a_{n-1}}\delta_{\Delta}
$$
where $\delta_{\Delta}$ is not divisible by $I_iK_{i-1}$ for $i=1,...,n-1$.  In particular, $\delta_{\Delta}$ is a standard monomial.  We prove the claim by induction on $a=\sum a_i$.

If $a=0$ then $\delta=\delta_{\Delta}$ is standard, and there is nothing to show.  Suppose $a>0$, and hence some $a_i>0$.  Then by Proposition \ref{relations},
$$
\delta=(\delta_{I_1}\delta_{K_0})^{a_1} \cdots(\delta_{I_i}\delta_{K_{i-1}})^{a_i-1} \cdots  (\delta_{I_{n-1}}\delta_{K_{n-2}})^{a_{n-1}}(\delta _{J_{i}^{\prime }}\delta
_{J_{i-1}}-\delta _{J_{i}}\delta _{J_{i-1}^{\prime }})\delta_{\Delta}
$$ 
Since $(\delta _{J_{i}^{\prime }}\delta
_{J_{i-1}}-\delta _{J_{i}}\delta _{J_{i-1}^{\prime }})\delta_{\Delta}$ is a linear combination of two standard monomials, each of which is not divisible by $I_iK_{i-1}$ for $i=1,...,n-1$, the result follows by induction.
\end{proof}
\begin{remark}\label{rem-homogeneous}
%
For a quadratic monomial $\delta _{C}\delta _{C'}$, we can simply apply Proposition \ref{relations} to express it as a linear combination of quadratic standard monomials%
\begin{equation}\label{quad-rel}
\delta _{C}\delta _{C'}=\sum_{h} {r_h} \delta _{D_h}\delta _{D'_h}
\end{equation}
Note that for all $h$, the numbers of entries equal to $i$ in the disjoint union $C \dot{\cup} C'$ and in the disjoint union $D_h \dot{\cup}D'_h$ are equal for $1 \leq i \leq n+1$.  Therefore, as tableaux,  $(D_h, D'_h)$ is obtained from $(C,C')$ just by rearranging the entries of $C $ and $C'$. In fact the only difference between the tableaux $(D_h, D'_h)$  is the position
of the entries $n$ and $n+1$.%

In general, once we have a linear combination of standard monomials for $\prod_{i} \delta_{C_i}$%
$$\prod_{i} \delta_{C_i}=\sum_{k} {s_k}(\prod_{i} \delta_{H_{k,i}})$$
then for each $k$, we have the semistandard tableau $H_k$ formed by $H_{k,i}$'s. Because of the reason explained above, for all $k$, the Young diagrams of the $H_k$'s are the same, and as tableaux, their only difference is the position of the entries $n$ and $n+1$.

\end{remark}

\begin{definition}
The \textit{shape} of a standard monomial $\delta _{\Delta }=\prod \delta
_{C_{i}}$ is $F/D$ with%
\begin{eqnarray*}
F &=&(f_{1},\cdots ,f_{n}) \\
D &=&(d_{1},\cdots ,d_{n-1})
\end{eqnarray*}%
where $F$ is the transpose of the Young diagram $(|C_{1}|,\cdots ,|C_{r}|)$
and $d_{k}$ in $D$ is the number of $k$'s in the disjoint union $\dot{\cup}
_{i=1}^{r}C_{i}$ for $1\leq k\leq n-1$. We write $sh(\delta _{\Delta })=F/D$.
\end{definition}

The following lemma is an immediate application of 
(\ref{joint weight vector}):

\begin{lemma}
\label{joint weight vector 2}Standard monomials $\delta _{\Delta }$ of shape
$F/D$ are weight vectors under the action of ${T}_{n}\times {T}_{n-1}$, i.e.,%
\begin{equation*}
(t,s)\cdot \delta _{\Delta }=\left( t_{1}^{-f_{1}}\cdots
t_{n}^{-f_{n}}\right) \left( s_{1}^{d_{1}}\cdots s_{n-1}^{d_{n-1}}\right)
\delta _{\Delta }
\end{equation*}%
where $t=diag(t_{1},\cdots ,t_{n},t_{n}^{-1},\cdots ,t_{1}^{-1})$ and $%
s=diag(s_{1},\cdots ,s_{n-1},1,1,s_{n-1}^{-1},\cdots ,s_{1}^{-1})$.  In particular,
$\delta_{\Delta} \in \B(D,F)$.
\end{lemma}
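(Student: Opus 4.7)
The plan is to reduce to the single-minor case by applying the $GL_{2n}$ weight formula (\ref{joint weight vector}) to each factor of $\delta_\Delta=\prod_{i=1}^{r}\delta_{C_i}$ and then translate the resulting exponents into the combinatorial data defining the shape $F/D$. First, for a single column set $C=\{c_1<\cdots<c_r\}\subset\{1,\dots,n,n+1\}$, equation (\ref{joint weight vector}) specializes to
$$(t,s)\cdot\delta_C=(t_1^{-1}\cdots t_r^{-1})(s_{c_1}\cdots s_{c_r})\delta_C,$$
where $s_c$ denotes the $c$-th diagonal entry of $s$. The crucial observation is that the $n$-th and $(n+1)$-st diagonal entries of $s=\mathrm{diag}(s_1,\dots,s_{n-1},1,1,s_{n-1}^{-1},\dots,s_1^{-1})$ are both equal to $1$, so the appearance of a column index $n$ or $n+1$ in any $C_i$ contributes a factor of $1$ to the $s$-weight, and only indices in $\{1,\dots,n-1\}$ are visible.

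Multiplying these single-minor weights over $i=1,\dots,r$ yields the total weight of $\delta_\Delta$. The exponent of $t_j^{-1}$ is $\#\{i:|C_i|\ge j\}$, which by the definition of the transpose of $(|C_1|,\dots,|C_r|)$ equals $f_j$; and, thanks to the observation above, the exponent of $s_k$ (for $1\leq k\leq n-1$) is $\#\{i:k\in C_i\}$, which equals the number of $k$'s in $\dot{\cup}_{i=1}^{r}C_i$, namely $d_k$. Combining these gives exactly the formula claimed in the lemma.

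For the ``in particular'' assertion, it was already observed in the paragraph preceding Proposition \ref{spanning} that each $\delta_C$ lies in $\B=\mathcal{R}(G_n)^{U_n^-\times U_{n-1}}$, so their product $\delta_\Delta$ does as well. The weight computation above then places $\delta_\Delta$ inside the $T_n\times T_{n-1}$ weight space of weight $(-F,D)$ in $\B$, which by the grading description following Proposition \ref{branchingalgprop} is precisely the component $\B(D,F)$.

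I anticipate no substantial obstacle: the argument is essentially bookkeeping against the definitions. The only point requiring care is the symplectic specialization---namely, that the middle two diagonal entries of $s$ are trivial---so that the index sets $J_j$, $J_j'$, and $K_k$, which involve $n$ or $n+1$, nonetheless yield the ``correct'' $T_{n-1}$-weight determined only by the indices in $\{1,\dots,n-1\}$.
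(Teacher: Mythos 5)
Your proposal is correct and is exactly the argument the paper intends: the paper dismisses this lemma as ``an immediate application of (\ref{joint weight vector})'', and you have simply carried out that application, correctly identifying the two bookkeeping points (the exponent of $t_j^{-1}$ counts columns of size at least $j$, giving the transpose $F$, and the middle two diagonal entries of $s$ being $1$ kill the contributions of the indices $n$ and $n+1$, leaving exactly $D$). The ``in particular'' clause is also justified the same way the paper does, via the observation preceding Proposition \ref{spanning} and the weight-space description of $\B(D,F)$.
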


\medskip

Now let us count the number of standard monomials in $\B(D,F)$.

\begin{proposition}\label{counting}
There is a bijection
$$
\{\delta_{\Delta}:sh(\delta_{\Delta})=F/D \} \leftrightarrow \{E: D\sqsubseteq E\sqsubseteq F \}
$$
In particular, $dim \B(D,F)=\#\{\delta_{\Delta}:sh(\delta_{\Delta})=F/D \}$.
\end{proposition}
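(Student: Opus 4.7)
The plan is to set up the bijection column-by-column by encoding each $C\in\mathcal{L}$ via an auxiliary triple. Set $A(C)=|C\cap\{1,\dots,n\}|$ and $B(C)=|C\cap\{1,\dots,n-1\}|$. A direct inspection of the four families $I_i,J_j,J'_j,K_k$ shows that $|C|-A(C),\,A(C)-B(C)\in\{0,1\}$, and that the triple $(|C|,A(C),B(C))$ determines $C$: explicitly, $(s,s,s)\!\leftrightarrow\! I_s$, $(s,s,s-1)\!\leftrightarrow\! J_{s-1}$, $(s,s-1,s-1)\!\leftrightarrow\! J'_{s-1}$, and $(s,s-1,s-2)\!\leftrightarrow\! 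K_{s-2}$. Moreover, along each cover in the Hasse diagram of $\mathcal{L}$ exactly one coordinate of this triple drops by one while the other two are unchanged, so $C\preceq C'$ forces $|C|\geq|C'|$, $A(C)\geq A(C')$, and $B(C)\geq B(C')$; hence for any multichain $C_1\preceq\cdots\preceq C_r$ the three sequences $(|C_i|)$, $(A(C_i))$, $(B(C_i))$ are weakly decreasing in $i$.

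For the forward direction, let $\delta_\Delta$ be a standard monomial of shape $F/D$. By definition of shape, $F^t=(|C_1|,\dots,|C_r|)$; and since each $C_i$ contributes precisely the entries $\{1,\dots,B(C_i)\}$ of values $\leq n-1$, one has $d_k=\#\{i:B(C_i)\geq k\}$, so $D^t=(B(C_1),\dots,B(C_r))$. I would then define $E=(e_1,\dots,e_n)$ by
\[
e_s := \#\{i : A(C_i)\geq s\},
\]
equivalently $E^t=(A(C_1),\dots,A(C_r))$. The interlacing $D\sqsubseteq E\sqsubseteq F$ is then immediate from the index-set inclusions
\[
\{i:|C_i|\geq s+1\}\subseteq\{i:A(C_i)\geq s\}\subseteq\{i:|C_i|\geq s\},
\]
\[
\{i:A(C_i)\geq s+1\}\subseteq\{i:B(C_i)\geq s\}\subseteq\{i:A(C_i)\geq s\},
\]
which follow from $|C|-1\leq A(C)\leq|C|$ and $A(C)-1\leq B(C)\leq A(C)$.

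For the inverse, given $D\sqsubseteq E\sqsubseteq F$, I set $r=f_1$ and let $C_i$ (for $1\leq i\leq r$) be the unique element of $\mathcal{L}$ with triple $(f^t_i,\,e^t_i,\,d^t_i)$ (padding $E^t$ and $D^t$ by zeros if necessary). The interlacing forces $f^t_i-e^t_i,\,e^t_i-d^t_i\in\{0,1\}$, so each such triple is valid. The two constructions are then manifestly inverse. \emph{The main point to verify is that these $C_i$ indeed form a chain in $\mathcal{L}$.} The only incomparable pair in $\mathcal{L}$ is $\{I_s,K_{s-1}\}$, with triples $(s,s,s)$ and $(s+1,s,s-1)$. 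If both appeared in our sequence, weak decrease of size would force $K_{s-1}$ to come first, but then $B$ would increase from $s-1$ to $s$, contradicting weak decrease of $B$; the opposite ordering fails weak decrease of size. So this pair never arises. Every other pair of valid triples satisfying the three componentwise inequalities corresponds to comparable columns in $\mathcal{L}$, by a short direct check using the explicit form of $I_i,J_j,J'_j,K_k$.

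Combining the bijection with Lemma \ref{G_branching}(2) and the identification $\B(D,F)\cong Hom_{G_{n-1}}(\tau_{n-1}^D,\tau_n^F)$ then gives $\dim\B(D,F)=\#\{E:D\sqsubseteq E\sqsubseteq F\}=\#\{\delta_\Delta:sh(\delta_\Delta)=F/D\}$.
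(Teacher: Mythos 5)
Your proof is correct and constructs the same bijection as the paper: your $E$ (with $E^t=(A(C_1),\dots,A(C_r))$) is precisely the diagram obtained by erasing the boxes labeled $n+1$ from the tableau $\Delta$, and your inverse is the paper's prescription of filling $F/E$ with $n+1$, $E/D$ with $n$, and the rest by row coordinates. The paper simply invokes the standard semistandard-tableau/Gelfand--Tsetlin conversion and leaves the chain and interlacing verifications implicit, whereas you carry them out explicitly via the triple encoding $(|C|,A(C),B(C))$ and its monotonicity along the Hasse diagram of $\mathcal{L}$ --- a faithful, more detailed rendering of the same argument.
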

\begin{proof} 
The bijection is a variation on the conversion procedure between semistandard Young tableaux and Gelfand-Tsetlin patterns (cf. \cite[Section 8.1.2]{GW09}). Let us be more specific about the procedure in our case. 

By the definition of $sh(\delta_{\Delta})$, if we erase all the boxes with $n$ and $n+1$ in the semistandard tableau $\Delta$, then the remaining tableau gives the Young diagram $D$. As an intermediate step, if we erase only the boxes with $(n+1)$, then it gives a Young diagram $E$ such that $D\sqsubseteq E$ and $E\sqsubseteq F$. 

Conversely, given $E$ such that $D\sqsubseteq E\sqsubseteq F$, define a semistandard Young tableau of shape $F$ as follows: label the boxes of $F/E$ by $n+1$, the boxes of $E/D$ by $n$, and the remaining empty boxes by their row coordinate.  The standard monomial corresponding to $E$ is then constructed from this semistandard tableau. 

The last statement follows by (\ref{B-DF-mult}) and Lemma \ref{G_branching} (2).
\end{proof}

\begin{example}
\label{example} In studying branching multiplicity spaces for $({G}_{4},{G}%
_{3})$, the following monomial $\delta _{\Delta }$
\begin{equation*}
\delta _{\Delta }=\delta _{\{1234\}}\delta _{\{1245\}}\delta
_{\{125\}}\delta _{\{14\}}\delta _{\{5\}}
\end{equation*}%
as a regular function on ${G}_{4}$ is a standard monomial. By concatenating
its column indices to make the semistandard Young tableau%
\begin{equation*}
\young(11115,2224,345,45)
\end{equation*}%
we see the shape of $\delta _{\Delta }$ is $F/D=(5,4,3,2)/(4,3,1)$. By erasing all the boxes with $5$'s, we obtain Young diagram $E=(4,4,2,1)$:%
\begin{equation*}
\young(\ \ \ \ ,\ \ \ \ ,\ \ ,\ )
\end{equation*}%
Note that the triple $(D,E,F)$ satisfies the doubly interlacing condition.
\begin{equation*}
\begin{array}{ccccccccc}
5 &  & 4&  &3 &  & 2 &  &   \\
& 4 &  & 4 &  &2 &  & 1&    \\
&  & 4&  & 3 &  & 1& &
\end{array}
\end{equation*}
\end{example}

\begin{theorem}
\label{StThm} 
The branching algebra $\B$ is an ASL on $\{\delta_C: C\in \mathcal{L}\}$ over $\mathbb{C}$.
In particular, standard monomials form a $\mathbb{C}$-basis of the algebra $%
\B$, and
$$
\B(D,F)=span\{\delta_{\Delta}: sh(\delta_{\Delta})=F/D\}.
$$
\end{theorem}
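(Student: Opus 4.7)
The plan is to reduce the theorem to a single linear-independence assertion and then dispatch that assertion via a leading-term argument. The three conclusions---the basis property, the equality $\B' = \B$, and the description of each graded component $\B(D,F)$ as the span of shape-$F/D$ standard monomials---will all collapse to one statement: for each $(D,F) \in \Lambda_{n-1,n}$, the standard monomials of shape $F/D$ are linearly independent inside $\B(D,F)$.

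Here is the reduction, using only what the paper has already set up. By Proposition \ref{spanning}, standard monomials span the subalgebra $\B' \subseteq \B$, and by Lemma \ref{joint weight vector 2} those of shape $F/D$ lie in the weight component $\B(D,F)$, so they span $\B'(D,F) \subseteq \B(D,F)$. Proposition \ref{counting} together with Lemma \ref{G_branching}(2) and the isomorphism (\ref{B-DF-mult}) gives that their number equals $\dim \B(D,F)$. Thus linear independence forces equality of dimensions, hence $\B'(D,F) = \B(D,F)$ for every $(D,F)$, yielding $\B' = \B$ and the full basis theorem. The ASL axioms of Definition \ref{Ei-ASL} follow immediately: axiom (1) is the basis property just established; axiom (2) is Proposition \ref{relations}, since the Hasse diagram in Definition \ref{dlattice-def} shows the only incomparable pairs in $\mathcal{L}$ are $(I_i, K_{i-1})$, and the straightening relation $\delta_{I_i}\delta_{K_{i-1}} = \delta_{J_i'}\delta_{J_{i-1}} - \delta_{J_i}\delta_{J_{i-1}'}$ has both right-hand terms expressed in comparable pairs whose smallest factor ($J_i'$ or $J_i$) lies strictly below both $I_i$ and $K_{i-1}$ in $\preceq$.

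The main obstacle is the linear independence step itself. My approach is a leading-term argument in $\mathbb{C}[M_{2n}]$: fix the lexicographic order on the matrix entries so that $x_{i,j} > x_{i',j'}$ iff $(i,j) < (i',j')$ lexicographically; then the leading monomial of $\delta_{\{c_1 < \cdots < c_r\}}$ is the diagonal $x_{1,c_1}\cdots x_{r,c_r}$, and the leading monomial of a standard monomial $\delta_\Delta$ is $\prod_{(i,j) \in \Delta} x_{i, \Delta(i,j)}$. The row multisets of the semistandard tableau $\Delta$ can be read off directly from this product, and a semistandard tableau of fixed shape is uniquely determined by its row multisets, so distinct $\delta_\Delta$ of shape $F/D$ have distinct leading monomials and are linearly independent in $\mathbb{C}[M_{2n}]$. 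The delicate final step is propagating independence through the restriction $\mathbb{C}[M_{2n}] \twoheadrightarrow \mathbb{C}[G_n]$; for this I would first establish the analogous standard monomial basis statement in $\mathcal{R}(GL_{2n})^{U_{2n}^-\times 1}$ via the classical standard monomial theory of the type-A flag variety, and then deduce the $Sp_{2n}$ statement by noting that the restriction map is equivariant for the $T_{2n}$ weight grading and that $\mathcal{L}$ has been chosen precisely to respect the symplectic identifications among Plücker coordinates.
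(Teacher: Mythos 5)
Your reduction is, up to the final step, the same as the paper's: Proposition \ref{spanning} gives spanning of $\B'$, Lemma \ref{joint weight vector 2} localizes the shape-$F/D$ standard monomials in $\B(D,F)$, Proposition \ref{counting} with Lemma \ref{G_branching}(2) and (\ref{B-DF-mult}) matches their number with $\dim\B(D,F)$, and Proposition \ref{relations} handles axiom (2) of Definition \ref{Ei-ASL} since the only incomparable pairs in $\mathcal{L}$ are $(I_i,K_{i-1})$. The paper's own proof concludes the basis property directly from this dimension count; you are right that what must still be supplied is the linear independence of the shape-$F/D$ standard monomials (equivalently, given the count, the equality $\B'\cap\B(D,F)=\B(D,F)$), and you correctly isolate this as the crux.

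The gap is in your proposed proof of that independence. The diagonal leading-term argument is fine in $\mathbb{C}[M_{2n}]$, and the $GL_{2n}$ statement is classical, but the descent to $G_n=Sp_{2n}$ is exactly where the difficulty sits, and the justification you offer does not close it. Linear independence is not preserved under restriction to a subvariety, and equivariance for the torus grading gives no protection here: on $GL_{2n}$ the standard monomials of shape $F/D$ with distinct intermediate diagrams $E$ are separated by their right $T_{2n}$-weights (they use different numbers of columns $n$ and $n+1$), but under the embedding $T_{n-1}\subset T_{2n}$ the torus element is $diag(s_1,\dots,s_{n-1},1,1,s_{n-1}^{-1},\dots,s_1^{-1})$, so all of these distinct $T_{2n}$-characters restrict to one and the same character, and all of these monomials land in the single weight space $\B(D,F)$ --- which is the whole point, since the branching is not multiplicity-free. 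Hence a relation in the ideal of $Sp_{2n}$ inside $\mathbb{C}[GL_{2n}]$ could a priori kill a combination of them, and nothing in ``$\mathcal{L}$ respects the symplectic identifications among Pl\"ucker coordinates'' rules this out. To repair the argument you need an actual transfer mechanism, for instance: every $\delta_C$ with $C\in\mathcal{L}$ depends only on the upper-left $n\times(n+1)$ block of the matrix, and the projection of $G_n$ onto that block is dominant (a generic $n\times(n+1)$ matrix extends to a Lagrangian frame giving the first $n$ rows of a symplectic matrix), so independence on $M_{n\times(n+1)}$ --- which your leading-term argument does provide --- transfers to $G_n$. Without something of this kind, the step you yourself flag as ``delicate'' is not a proof.
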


\begin{proof}
Let us check the first condition in Definition \ref{Ei-ASL}. Recall that $\B'$ is the subalgebra of $\B$ generated by $\{\delta_{C}:C \in \mathcal{L}\}$, and that by Proposition \ref{spanning}, $\B'$ is spanned by standard monomials. Now consider the space $\B' \cap \B(D,F)$. By Lemma \ref{joint weight vector 2}, this space contains all the standard monomials of shape $F/D$, and therefore, it is spanned by standard monomials of shape $F/D$. By Proposition \ref{counting}, the number of standard monomials of shape $F/D$ is equal to the dimension of the space $\B(D,F)$. This shows that, for all doubly interlacing pairs $(D,F) \in \wedge_{n-1,n}$, the standard monomials of shape $F/D$ are a basis of $\B(D,F)$. Since $\B=\oplus_{(D,F)} \B(D,F)$ and the dimension of $\B(D,F)$ is zero unless $(D,F)$ satisfies the doubly interlacing condition (Lemma \ref{G_branching}), standard monomials form a $\mathbb{C}$-basis of $\B$. With Proposition \ref{relations}, which shows that $\B$ satisfies the second condition in Definition \ref{Ei-ASL}, this shows that the branching algebra $\B$ for $(G_n, G_{n-1})$ is an ASL on $\{ \delta_{C}:C \in \mathcal{L} \}$ over $\mathbb{C}$.
\end{proof}

\begin{definition}
The basis $\{ \delta_{\Delta} \}$ from Theorem \ref{StThm} is called  
the \textbf{standard monomial basis} of $\B$.
\end{definition}

\medskip

\section{The standard monomial basis as a canonical weight basis}
In this section we give an interpretation of the standard monomial basis of the previous section as a canonical weight basis.
In section \ref{irract} we recall a theorem in \cite{Yac09} which shows that the natural $SL_{2}$ action on $\B$ can be canonically extended to an action of an $n$-fold product of $SL_{2}$'s in such a way that the multiplicity spaces $\B(D,F)$ are irreducible.  As a corollary of this theorem we obtain a canonical decomposition of $\B$ into one-dimensional spaces.  We then show in section \ref{stbiswtb} that these one-dimensional spaces are exactly the spans of standard monomial basis elements.

\subsection{An irreducible action on the multiplicity spaces \label{irract}}

The branching algebra $\B$ carries a natural algebraic representation of $SL_{2}$.
Indeed, there is a copy of $SL_{2}$ in $G_{n}$ that commutes with
$G_{n-1} \subset G_{n}$ (cf. (\ref{embedding})). This copy of $SL_{2}$ acts
on the branching multiplicity spaces $\mathcal{B}(D,F)$, i.e. on the graded
components of $\B$. This action is described as follows: an element $b \in \mathcal{B}(D,F)$ is a $G_{n-1}$ equivariant map from $\tau_{n-1}^D$ to $\tau_n^F$, and given $x \in SL_{2}$, $x.b$ is another such morphism defined by $(x.b)(v)=x.b(v)$ for any $v\in \tau_{n-1}^D$.  Therefore $\B$ is a graded $SL_{2} $-algebra. We refer to this action as the ``natural'' $SL_{2}$ action on $\B$.

The branching multiplicity spaces are not irreducible $SL_{2}$-modules.  Indeed, they are an $n$-fold tensor product of irreducible $SL_{2}$-modules (see Theorem \ref{ExtThm} below).  Nevertheless, the natural $SL_{2}$ action can be uniquely extended to an irreducible action of a product of $SL_{2}$'s.  In this section we explain how this is done.

Let
$L=SL_{2} \times \cdots \times SL_{2}$ be the $n$-fold product of $%
SL_{2} $.  We want to construct an irreducible action of $L$ on $\B(D,F)$,
in such a way that the diagonally embedded $SL_{2} \subset L$ recovers
the natural action.  Notice that in this formulation $L$ is not the product of $SL_{2}$'s that lives in $G_{n}$.  Indeed, the latter
product of $SL_{2}$'s does not act on the multiplicity spaces.  The existence of this $L$-action is more subtle, and can only be ``seen'' by
considering all multiplicity spaces together, i.e. by considering the branching algebra.


For two Young diagrams $F=(f_{1},f_{2},\cdots )$ and $D=(d_{1},d_{2},\cdots
) $, the inequalities in doubly interlacing condition for $(D,F)$ given in  Lemma \ref{G_branching}, i.e.
\begin{equation*}
f_{i}\geq d_{i}\geq f_{i+2}
\end{equation*}%
do not constrain the relation between $d_{i}$ and $f_{i+1}$. In other words, we can have either $d_{i} \geq f_{i+1}$%
, or $d_{i} \leq f_{i+1}$, or both. This motivates the following:

\begin{definition}
\label{ordertype} An \textbf{order type} $\sigma$ is a word in the alphabet $%
\{\geq,\leq\}$ of length $n-1$.
\end{definition}

Suppose $(D,F) \in \Lambda _{n-1,n}$ and $\sigma = (\sigma_{1} \cdots
\sigma_{n-1})$ is an order type. Then we say $(D,F)$ is of \textbf{order
type $\sigma$} if for $i=1,...,n-1$,
\begin{equation*}
\left\{
\begin{array}{rl}
\sigma_{i} = \text{``}\geq\text{''} \Longrightarrow d_{i} \geq f_{i+1} &  \\
\sigma_{i} = \text{``}\leq\text{''} \Longrightarrow d_{i} \leq f_{i+1} &
\end{array}
\right.
\end{equation*}
For example, consider the pair $(D,F)$, where $F = (3,2,1)$ and $D= (3,0)$.
Since $d_{1} \geq f_{2}$ and $d_{2} \leq f_{3}$, the pair $(D,F)$ is of
order type $\sigma = (\geq\leq)$.

It will also be useful to introduce the notion of a \textbf{generalized order type}: if $%
d_{i}=f_{i+1}$ then we place an ``$=$'' in the $i^{th}$ position to denote
that $(D,F)$ satisfies order types with both $\geq$ and $\leq$
in the $i^{th}$ position. For example, if $F$ is as above and $D=(2,0)$ then we say $%
(D,F)$ is of generalized order type $(=\leq)$, since in this case $(D,F)$
satisfies both types $(\geq\leq)$ and $(\leq\leq)$.

Let $\Sigma$ be the set of order types, and for each $\sigma \in \Sigma$ set
\begin{equation*}
\Lambda _{n-1,n}(\sigma) = \{(D,F) \in \Lambda _{n-1,n} : (D,F) \text{ is of
order type } \sigma \}.
\end{equation*}
\begin{lemma}
For $\sigma \in \Sigma$, $\Lambda _{n-1,n}(\sigma)$ is a sub-semigroup of $%
\Lambda _{n-1,n}$.
\end{lemma}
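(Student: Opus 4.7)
The plan is to verify directly that $\Lambda_{n-1,n}(\sigma)$ is closed under the addition operation inherited from $\Lambda_{n-1,n}$. Recall that, by the paper's setup, the semigroup structure on $\Lambda_{n-1,n}$ is componentwise addition of pairs $(D,F)$ viewed inside $\mathbb{Z}_{\geq 0}^{n-1} \times \mathbb{Z}_{\geq 0}^{n}$, so the only thing to verify is that the defining inequalities of the order type are preserved under this operation.

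First I would check that the sum of two elements of $\Lambda_{n-1,n}$ still lies in $\Lambda_{n-1,n}$. Componentwise addition of non-negative weakly decreasing sequences is again non-negative and weakly decreasing, and the length bounds are preserved since $\ell(D+D') \leq \max(\ell(D),\ell(D')) \leq n-1$ and similarly $\ell(F+F') \leq n$. Thus $(D+D',F+F') \in \Lambda_{n-1,n}$.

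Next, suppose $(D,F)$ and $(D',F')$ both lie in $\Lambda_{n-1,n}(\sigma)$. I would check that $(D+D',F+F')$ is of order type $\sigma$ one position at a time. Fix $i \in \{1,\dots,n-1\}$. If $\sigma_i =$ ``$\geq$'', then by hypothesis $d_i \geq f_{i+1}$ and $d'_i \geq f'_{i+1}$, and adding these two inequalities gives $d_i + d'_i \geq f_{i+1} + f'_{i+1}$, which is exactly the defining inequality for the $i$-th slot of $(D+D',F+F')$. The case $\sigma_i =$ ``$\leq$'' is symmetric. Running through all $i$ from $1$ to $n-1$ shows $(D+D',F+F') \in \Lambda_{n-1,n}(\sigma)$.

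There is no real obstacle here: the statement reduces to the trivial observation that non-strict linear inequalities between non-negative integers are preserved under coordinatewise addition. The substantive content of the lemma is structural rather than technical, namely that fixing an order type carves out a face-like sub-semigroup of $\Lambda_{n-1,n}$, a fact which will be used in the next subsection to transport the $L$-action coherently across graded components sharing a common order type.
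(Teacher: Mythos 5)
Your proof is correct and follows essentially the same route as the paper: the paper likewise fixes a position $i$, notes that $d_i \geq f_{i+1}$ and $d'_i \geq f'_{i+1}$ add to give $d_i+d'_i \geq f_{i+1}+f'_{i+1}$, and handles ``$\leq$'' symmetrically. Your additional check that the sum stays in $\Lambda_{n-1,n}$ is a harmless elaboration of what the paper already established when defining the semigroup structure in Section 2.
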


\begin{proof}
Suppose $(D,F),(D',F') \in \Lambda _{n-1,n}(\sigma)$, and suppose $\sigma_i=\text{``}\geq\text{''}$.  Then $d_i \geq f_{i+1}$ and $d_i' \geq f_{i+1}'$, which of course implies that $d_i+d_i' \geq f_{i+1}+f_{i+1}'$, and hence $(D+D',F+F')\in \Lambda _{n-1,n}(\sigma)$.  The argument for $\sigma_i=\text{``}\leq\text{''}$ is entirely analogous.  
\end{proof}

Since $\B$ is $\Lambda_{n-1,n}$-graded, in particular $$\B(D,F)\B(D',F') \subset \B(D+D',F+F')$$
for $(D,F),(D',F') \in \Lambda_{n-1,n}$.  Therefore, by the above lemma,
\begin{equation*}
\B(\sigma) = \bigoplus_{(D,F) \in \Lambda _{n-1,n}(\sigma)}
\B(D,F)
\end{equation*}
is a subalgebra of $\B$.  Note that $\B(\sigma)$ has unit the trivial function on $G_n$, which is an element of the $(0,0)$-component. 

To each $(D,F) \in \Lambda_{n-1,n}$ we associate an irreducible $L$-module
as follows. Let $V_{k}$ be the irreducible $SL_{2}$-module of dimension $k+1$%
. Set $D=(d_{1},...,d_{n-1})$ and $F=(f_{1},...,f_{n})$, and let $(x_{1}
\geq y_{1} \geq \cdots \geq x_{n} \geq y_{n})$ be the non-increasing
rearrangement of the elements $\{d_{1},...,d_{n-1},f_{1},...,f_{n}\}$.
Define $r_{i}(D,F)=x_{i}-y_{i}$ for $i=1,...,n$, and let $\A(D,F)$ be the
irreducible $L$-module
\begin{equation*}
\A(D,F)=\bigotimes_{i=1}^{n}V_{r_{i}(D,F)}.
\end{equation*}

\begin{theorem}[{\protect\cite[Theorem 3.5]{Yac09}}]
\label{ExtThm}There is a unique representation $(\Phi,\B)$ of $%
L$ satisfying the following two properties:

\begin{enumerate}
\item For all $(D,F) \in \Lambda_{n-1,n}$, $\B(D,F)$ is an
irreducible $L$-invariant subspace of $\B$. If $\mathcal{B}%
(D,F)$ is nonzero, then $\B(D,F)$ is isomorphic to $%
\A(D,F) $.

\item For all $\sigma \in \Sigma$, $L$ acts as algebra automorphisms on $%
\B(\sigma)$.
\end{enumerate}

Moreover, $Res_{SL_{2}}^{L}(\Phi)$ recovers the natural action of $SL_{2}$
on $\B$.
\end{theorem}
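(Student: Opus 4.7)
The plan is to construct the $L$-representation by exploiting the cover of $\B$ by the subalgebras $\B(\sigma)$ indexed by order types $\sigma \in \Sigma$. The motivation for this reduction is that within each fixed order type, the doubly-interlacing data forces a unique pairing of the $2n-1$ entries of $D$ and $F$ (padded with a zero if necessary) into the $n$ adjacent pairs $(x_i,y_i)$; these pairs should correspond to the $n$ factors of $L$, with $V_{r_i(D,F)}$ realized as the relevant piece.

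First I would identify, for each $\sigma$, a set of natural algebra generators for $\B(\sigma)$, obtained as certain of the $\delta_C$'s (or simple quadratic products thereof) from Definition~\ref{dlattice-def}. Because a fixed order type makes the ordering of the entries of $D \cup F$ completely determined, one expects $\B(\sigma)$ to be a semigroup algebra, indeed a polynomial algebra, on these generators, which fall naturally into $n$ disjoint groups indexed by the pairs $(x_i,y_i)$. Second, I would declare the $i$-th $SL_2$ factor of $L$ to act on the $i$-th group of generators as a copy of the defining representation of $SL_2$ (and trivially on the other groups), then extend uniquely to algebra automorphisms of $\B(\sigma)$.

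To show that these partial actions assemble into a single $L$-action on all of $\B$, I would analyze the overlaps $\B(\sigma) \cap \B(\sigma')$. Here generalized order types from Definition~\ref{ordertype} enter: at any position where $\sigma$ and $\sigma'$ disagree, the pair $(D,F)$ being of both types forces $x_i = y_i$, so $r_i(D,F) = 0$ and the $i$-th $SL_2$ acts trivially in both constructions. Thus the actions agree on the overlap. Once the $L$-action is globally defined, the isomorphism $\B(D,F) \cong \A(D,F)$ follows from a character count: the natural generators of $\B(\sigma)$ directly yield the weight decomposition of $\B(D,F)$ under a maximal torus of $L$, which matches that of $\bigotimes_i V_{r_i(D,F)}$; by Proposition~\ref{counting} together with Lemma~\ref{G_branching}(2) the dimensions also agree, so the irreducible $L$-module $\A(D,F)$ must be the whole of $\B(D,F)$.

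Finally, to see that $\mathrm{Res}_{SL_2}^L(\Phi)$ recovers the natural $SL_2$-action, I would check directly on the chosen generators $\delta_C$ that the diagonally-embedded $SL_2 \subset L$ acts the same way as the $SL_2 \subset G_n$ centralizing $G_{n-1}$; this is a short computation using the embedding \eqref{embedding}. Uniqueness then follows by a rigidity argument: any candidate $\Phi'$ must act as automorphisms on each $\B(\sigma)$, so it is determined by its action on generators; irreducibility of each $\B(D,F)$ together with the identification of weights forces $\Phi' \cong \Phi$. The main obstacle in this plan is the compatibility across order types in the second step: since $\B$ is properly covered, not equal to, any single $\B(\sigma)$, one must keep careful combinatorial track of which generators and which relations lie in which $\B(\sigma)$, and verify that the local $L$-actions globalize coherently.
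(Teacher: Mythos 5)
You should first be aware that the paper does not prove this statement at all: Theorem \ref{ExtThm} is imported verbatim from \cite[Theorem 3.5]{Yac09}, so there is no internal proof to compare your attempt against. Judged on its own terms, your outline is structurally reasonable (order types, the polynomial subalgebras $\B(\sigma)$, an action defined generator-by-generator, a gluing step), but it contains a concretely false step. You claim that if $\sigma$ and $\sigma'$ disagree in position $i$ and $(D,F)$ satisfies both, then $x_i=y_i$, hence $r_i(D,F)=0$ and the $i$-th $SL_2$ acts trivially on the overlap. That is not what $d_i=f_{i+1}$ gives. Take $n=2$, $F=(2,1)$, $D=(1)$: this pair has generalized order type $(=)$ and so lies in $\B(\geq)\cap\B(\leq)$, yet the sorted sequence is $2\geq 1\geq 1\geq 0$, so $r_1=r_2=1$ and $\B(D,F)\cong V_1\otimes V_1$ is four-dimensional with both factors acting nontrivially. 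What equality $d_i=f_{i+1}$ actually buys is that the two sorted pairings coincide, so $\A(D,F)$ is unambiguous; agreement of the two locally defined \emph{actions} must be argued differently, e.g.\ by noting that the generators $\delta_{J_j},\delta_{J'_j}$ belong to every maximal chain and are attached to the $(j+1)$-st factor independently of $\sigma$, while $\delta_{I_i},\delta_{K_{i-1}}$ are $L$-invariant.

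The second, deeper problem is that the final assertion --- that $Res^{L}_{SL_{2}}(\Phi)$ recovers the natural action --- is dismissed as a short computation on generators. Verifying the diagonal action on generators only determines the action on all of $\B(\sigma)$ if you already know that the \emph{natural} $SL_2$-action is by algebra automorphisms of $\B(\sigma)$; but that statement is essentially the theorem restricted to the diagonal, and it is where the hard input lives (the identification of each multiplicity space, as a module for the natural $SL_2$, with the diagonal module $\bigotimes_i V_{r_i(D,F)}$, which is the main content of \cite{Yac09}). Your uniqueness argument inherits the same circularity: you assume a competing $\Phi'$ is determined by its values on generators, but you must first show that properties (1) and (2) pin those values down, which requires the multiplicity-freeness of the $T_L$-weight decomposition of each $\A(D,F)$ rather than mere abstract irreducibility.
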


Let $T_{SL_{2}}$ be the torus of $SL_{2}$ consisting of diagonal matrices.
Let $T_{L}=T_{SL_{2}}\times \cdots \times T_{SL_{2}}$ be the diagonal torus
of $L$. The action of $T_{L}$ on $\B(D,F)$ decomposes it uniquely into weight spaces, which, by the above theorem, are one-dimensional.

\begin{remark}
The $T_{L}$ weight spaces of $\B(D,F)$ are also weight spaces for the natural $SL_{2}$-action,
via the diagonally embedding $T_{SL_{2}} \subset L$.  Moreover, $T_{L}$ is the unique maximal torus of $L$ containing $T_{SL_{2}}$.
Therefore, the decomposition we obtain in this way is the unique decomposition of $\mathcal{B}(D,F)$ into spaces which are
simultaneously weight spaces for a
torus of $L$ and weight spaces for $T_{SL_{2}}$.  Moreover, the choice of the torus $T_{SL_{2}}$ is induced by our choice of torus of $G_{n}$.
In other words, the decomposition of $\B(D,F)$ into one dimensional spaces depends only the choice of torus of $G_{n}$.
\end{remark}

We now make this decomposition precise.  Suppose $(D,F) \in \Lambda_{n-1,n}$
and $(D,F)$ satisfies the doubly interlacing condition (so that $\B(D,F)$ is nonzero).
Then the weight spaces of $T_{L}$ on $\A(D,F)$, and hence $\mathcal{B}%
(D,F)$, are indexed by Young diagrams $E$ satisfying the condition 
$D\sqsubseteq E \sqsubseteq F$. Indeed, the diagram $E=(e_{1},e_{2},...)$
corresponds to the weight
\begin{equation}  \label{EqWt}
(t_{1},...,t_{n}) \in T_{L} \mapsto \prod_{i=1}^{n}
t_{i}^{2e_{i}-x_{i}-y_{i}}
\end{equation}
(cf. (Lemma 7.1, \cite{Yac09})). Let $\B(D,E,F)$ denote the
one dimensional weight space of $\B(D,F)$ parameterized by $E$.

\begin{corollary}
\label{ExtCor} There is a canonical decomposition of $\B$ into
one dimensional $T_{L}$ weight spaces
\begin{equation*}
\B=\bigoplus_{D\sqsubseteq E \sqsubseteq F}\mathcal{B}(D,E,F).
\end{equation*}%
In particular, $\B$ has a $T_{L}$ weight basis which is unique
up to scalar.
\end{corollary}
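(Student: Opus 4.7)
The plan is to reduce the corollary to the standard weight decomposition of the $L$-module $\A(D,F)$. By Theorem \ref{ExtThm}, $\B = \bigoplus_{(D,F) \in \Lambda_{n-1,n}} \B(D,F)$ as $L$-modules, with each nonzero graded component isomorphic to $\A(D,F) = \bigotimes_{i=1}^n V_{r_i(D,F)}$. It therefore suffices to decompose each $\A(D,F)$ into $T_L$-weight spaces and identify the resulting weights.

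Since $V_k$ carries a basis of $T_{SL_2}$-weight vectors with the $(k+1)$ distinct weights $-k, -k+2, \ldots, k-2, k$ (each weight space one-dimensional), the tensor product $\A(D,F)$ decomposes into one-dimensional $T_L$-weight spaces indexed by tuples $(m_1, \ldots, m_n)$ with $m_i \in \{-r_i, -r_i+2, \ldots, r_i\}$. Formula (\ref{EqWt}) reindexes these tuples by integer sequences $E = (e_1, \ldots, e_n)$ via $m_i = 2e_i - x_i - y_i$, turning the constraint $|m_i| \leq r_i = x_i - y_i$ into $y_i \leq e_i \leq x_i$. Since $(x_1 \geq y_1 \geq \cdots \geq x_n \geq y_n)$ is by construction the non-increasing rearrangement of the entries of $D$ and $F$, this bound is equivalent to the doubly interlacing condition $D \sqsubseteq E \sqsubseteq F$ — the same reparameterization already exploited in Lemma \ref{G_branching}(2) and Proposition \ref{counting}.

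Combining these two steps gives $\B(D,F) = \bigoplus_{D \sqsubseteq E \sqsubseteq F} \B(D,E,F)$ with every summand one-dimensional, which, summed over $(D,F)$, yields the displayed decomposition of $\B$. A $T_L$-weight basis is then obtained by choosing a nonzero vector in each $\B(D,E,F)$, and uniqueness up to scalar is immediate from the fact that each weight space is one-dimensional: any two weight bases must agree on each line up to a scalar.

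The main (modest) obstacle is the combinatorial identification in the second step — verifying that the weights of $\A(D,F)$ are naturally indexed by the interlacing diagrams $E$. However, this is precisely what the cited formula (\ref{EqWt}) from \cite{Yac09} provides, so beyond a short bookkeeping check the corollary follows from Theorem \ref{ExtThm} without further difficulty.
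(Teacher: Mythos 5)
Your argument is correct and follows the same route as the paper: the paper likewise deduces the corollary directly from Theorem \ref{ExtThm} by noting that each nonzero $\B(D,F)\cong\A(D,F)$ splits into one-dimensional $T_L$ weight spaces, with the weights reindexed by the interlacing diagrams $E$ via formula (\ref{EqWt}) (citing Lemma 7.1 of \cite{Yac09}). Your extra bookkeeping converting $|m_i|\leq r_i$ into $y_i\leq e_i\leq x_i$ and hence into $D\sqsubseteq E\sqsubseteq F$ is exactly the check the paper delegates to that reference.
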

%
%
%

\subsection{The standard monomial basis is the canonical weight basis \label{stbiswtb}}

We now show that the standard monomials basis defined in Theorem \ref{StThm}
is compatible with the canonical decomposition appearing in Corollary \ref{ExtCor}.
In other words, the standard monomial basis is the unique (up to scalar) $T_{L}$ weight
basis of $\B$.

Suppose $\Delta \subset \mathcal{L}$ is a chain, and the
corresponding standard monomial $\delta_{\Delta}$ has shape $F/D$. We say
$\Delta$ is of order type $\sigma \in \Sigma$ if the pair $(D,F)$ is of type $%
\sigma$. The following lemma shows that order types are intimately connected
to the distributive lattice $\mathcal{L}$.

\begin{lemma}
\label{mainlemma} A set of column indices $\{C_{i}:i=1,...,r\}$ form a chain
$\Delta=(C_{1}\preceq \cdots \preceq C_{r})$ if, and only if, they satisfy a
common order type $\sigma \in \Sigma$.
\end{lemma}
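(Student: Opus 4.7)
The plan is to exploit the very specific structure of $\mathcal{L}$: the only source of incomparability in $\mathcal{L}$ comes from the pairs $\{I_i, K_{i-1}\}$ for $i = 1, \dots, n-1$. By inspection of the Hasse diagram in Definition \ref{dlattice-def}, every other pair of elements lies on the single ``spine''
$$
J_{n-1} \prec J_{n-1}' \prec \cdots \prec J_i' \prec \{I_i,\, K_{i-1}\} \prec J_{i-1} \prec \cdots \prec J_0 \prec J_0',
$$
and is therefore comparable. Consequently, a set $\{C_i\}$ forms a chain in $\mathcal{L}$ if and only if for every $i \in \{1, \dots, n-1\}$ it contains at most one of $I_i$ and $K_{i-1}$.

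Next, for each $C \in \mathcal{L}$ viewed as a one-column chain, I compute the shape $(D_C, F_C)$ and record which $\sigma \in \Sigma$ it satisfies. A short calculation using $|I_i| = i$, $|J_p| = |J_p'| = p+1$ and $|K_p| = p+2$, together with the entries of these subsets, gives three cases. For $C = I_i$ one finds $d_i > f_{i+1}$ while $d_k = f_{k+1}$ at every other position, so $I_i$ forces $\sigma_i = \text{``}\!\geq\!\text{''}$ and imposes no other constraint. For $C = K_{i-1}$ one similarly finds $d_i < f_{i+1}$ and $d_k = f_{k+1}$ elsewhere, so $K_{i-1}$ forces $\sigma_i = \text{``}\!\leq\!\text{''}$. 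For $C = J_p$ or $C = J_p'$ one has $d_k = f_{k+1}$ at every $k$, so these elements impose no constraint on $\sigma$ at all. Hence a common order type $\sigma \in \Sigma$ exists for the multiset $\{C_i\}$ if and only if at no level $i$ do both $I_i$ and $K_{i-1}$ appear, since these two elements impose contradictory constraints at position $i$ and nothing else constrains position $i$.

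Combining the two steps, both the chain condition and the existence of a common order type reduce to the same combinatorial requirement---``no level $i$ contains both $I_i$ and $K_{i-1}$''---and the lemma follows. As a bonus one sees that, for a chain $\Delta$, the order type of $\Delta$ determined by the pair $(D,F)$ of the whole tableau agrees with the common order type of its individual columns.

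The main obstacle is not deep but demands care: one must carry out the shape computation cleanly for each of the four column types $I_i$, $J_p$, $J_p'$, $K_p$, and in particular confirm that $J_p$ and $J_p'$ impose \emph{no} constraint. Only then does it follow that $\{I_i, K_{i-1}\}$ is the sole source of inconsistency in the common-order-type condition, making it coincide exactly with the chain condition.
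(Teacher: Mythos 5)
Your proof is correct and follows essentially the same route as the paper's: both arguments rest on the observations that the only incomparable pairs in $\mathcal{L}$ are $\{I_{i},K_{i-1}\}$, that $I_{i}$ and $K_{i-1}$ force the contradictory constraints $d_{i}>f_{i+1}$ and $d_{i}<f_{i+1}$ at position $i$, and that $J_{j}$, $J_{j}'$ impose no constraint, so that both the chain condition and the common-order-type condition reduce to excluding $\{I_{i},K_{i-1}\}$ at every level. The paper merely records the generalized order types of the four families and runs the two implications separately, while you make the reduction to the single combinatorial criterion explicit; the content is the same.
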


\begin{proof}
First we note that
\begin{eqnarray*}
I_{i} &\text{is of generalized type}& (= \cdots \geq \cdots =) \\
J_{j} &\text{is of generalized type}& (= \cdots =) \\
J_{j}^{\prime } &\text{is of generalized type}& (= \cdots =) \\
K_{k} &\text{is of generalized type}& (= \cdots \leq \cdots =)
\end{eqnarray*}%
where in the first line the ``$\geq$" sign appears in the $i^{th}$ position,
and in the last line the ``$\leq$" sign appears in the $k+1^{th}$ position.

Now suppose a set of column indices $\{C_{i}:i=1,...,r\}$ form a chain in $%
\mathcal{L}$. Then for all $i \geq 1$ we know that
\begin{equation*}
\{I_{i},K_{i-1}\} \nsubseteq \{C_{1},...,C_{r}\}.
\end{equation*}
Define a generalized order type $\sigma = (\sigma_{1} \cdots \sigma_{n-1})$
by
\begin{equation*}
\sigma_{i} = \left\{
\begin{array}{lr}
\geq \text{ if } I_{i} \in \{C_{1},...,C_{r}\} &  \\
\leq \text{ if } K_{i-1} \in \{C_{1},...,C_{r}\} &  \\
= \text{ otherwise } &
\end{array}
\right.
\end{equation*}
Clearly the set $\{C_{1},...,C_{r}\}$ satisfies the type $\sigma$.

Conversely, suppose the elements of $\Delta = \{C_{1},...,C_{r}\}$
satisfy a common order type $\sigma=(\sigma_{1} \cdots \sigma_{n-1})$. Then
\begin{eqnarray*}
\sigma_{i} = \text{``}\geq\text{''} \Longrightarrow I_{i} \in \Delta,
K_{i-1} \not \in \Delta \\
\sigma_{i} = \text{``}\leq\text{''} \Longrightarrow I_{i} \not \in \Delta, K_{i-1} \in \Delta \\
\sigma_{i} = \text{``}=\text{''} \Longrightarrow I_{i} \not \in \Delta,
K_{i-1} \not \in \Delta
\end{eqnarray*}%
Therefore $\{I_{i},K_{i-1}\} \nsubseteq \Delta$ for all $i$, i.e.
$\Delta$ is a chain in $\mathcal{L}$.
\end{proof}

\begin{theorem}
\label{CanThm} The standard monomial basis is the unique (up to scalar) $T_{L}$ weight
basis of the representation $(\Phi,\B)$ of $L$.
\end{theorem}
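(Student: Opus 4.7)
The plan is to reduce the theorem to showing that every standard monomial $\delta_{\Delta}$ is itself a $T_L$-weight vector in $\B$. Once this is established, Theorem \ref{StThm} already provides the standard monomials as a $\mathbb{C}$-basis of $\B$, while Corollary \ref{ExtCor} asserts that the $T_L$-weight basis is unique up to scalar; the standard monomial basis must therefore coincide, up to scalar, with the canonical $T_L$-weight basis. Cardinality matches automatically: Proposition \ref{counting} gives exactly one standard monomial of shape $F/D$ per $E$ with $D \sqsubseteq E \sqsubseteq F$, which is precisely the number of one-dimensional $T_L$-weight spaces in $\B(D,F)$.

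First I would verify that each generator $\delta_C$ for $C \in \mathcal{L}$ is already a $T_L$-weight vector. A direct dimension count via Proposition \ref{counting} shows that $\B(D_C, F_C)$ is one-dimensional when $C \in \{I_i, K_k\}$, so $\delta_{I_i}$ and $\delta_{K_k}$ are trivially weight vectors. For $C \in \{J_j, J_j'\}$, both generators share the shape with $F = (1^{j+1})$ and $D = (1^{j}, 0^{n-1-j})$, and $\B(D,F)$ is two-dimensional. Computing the associated irreducible $L$-module yields $\A(D,F) \cong V_1$, on which $L$ acts only through the $(j+1)$-th $SL_2$ factor. Consequently the $T_L$-weight decomposition of $\B(D,F)$ coincides with the natural $T_{SL_2}$-weight decomposition of $V_1$ into its $\pm 1$ eigenspaces. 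Using formula (\ref{joint weight vector}), the natural $T_{SL_2}$-weights of $\delta_{J_j}$ and $\delta_{J_j'}$ compute to $+1$ and $-1$, so they sit in distinct one-dimensional $T_L$-weight spaces and are thus $T_L$-weight vectors.

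Next I would lift the result from generators to products. Given a chain $\Delta = (C_1 \preceq \cdots \preceq C_r)$ of order type $\sigma$, Lemma \ref{mainlemma} combined with an inspection of the generalized order types of the $C_i$ (namely, $I_i \in \Delta$ forces $\sigma_i = \text{``}\geq\text{''}$, $K_k \in \Delta$ forces $\sigma_{k+1} = \text{``}\leq\text{''}$, while $J_j$ and $J_j'$ have all-equal generalized type and belong to every $\B(\sigma)$) shows that each $\delta_{C_i}$ lies in the subalgebra $\B(\sigma)$. By Theorem \ref{ExtThm}(2), $L$ and hence $T_L$ acts on $\B(\sigma)$ by algebra automorphisms, so a product of $T_L$-weight vectors inside $\B(\sigma)$ is again a $T_L$-weight vector. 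Therefore $\delta_{\Delta} = \prod \delta_{C_i}$ is a $T_L$-weight vector in $\B$, which completes the reduction.

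The hard part will be Step 1 for the $J_j, J_j'$ case: matching the abstract $T_L$-weight decomposition of $\B(D,F)$ with the concrete natural $T_{SL_2}$-weight decomposition that comes from the right translation formula (\ref{joint weight vector}). The matching rests on the observation that $\A(D,F) = V_1$ factors through a single $SL_2$ in $L$, so the diagonal $T_{SL_2}$ acts with the same one-dimensional eigenspaces as the full $T_L$; once this identification is in hand, the explicit weights of $\delta_{J_j}$ and $\delta_{J_j'}$ determine which $T_L$-eigenspaces they span, and Step 2 propagates this to all standard monomials.
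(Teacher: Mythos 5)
Your proposal is correct and follows essentially the same route as the paper's proof: show each generator $\delta_C$ is a $T_L$-weight vector by reducing to the natural $T_{SL_2}$-action on the single nontrivial tensor factor of $\A(D_C,F_C)$, then propagate to standard monomials via Lemma \ref{mainlemma} and the algebra-automorphism property in Theorem \ref{ExtThm}(2), and conclude with Theorem \ref{StThm} and Corollary \ref{ExtCor}. Your explicit dimension counts and the computation of $\A(D,F)\cong V_1$ for the $J_j,J_j'$ case are slightly more detailed than the paper's appeal to Theorem \ref{ExtThm}(1), but the argument is the same.
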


\begin{proof}
First we prove that if $C \in \mathcal{L}$ then $\delta_{C}$ is a
weight vector for the action of $T_{L}$ on $\B$. Suppose $F/D$
is the shape of $\delta_{C}$. Since $D$ and $F$ are both columns, by Theorem %
\ref{ExtThm}(1) $B(D,F)$ is isomorphic either to a trivial $L$-module, or to
\begin{equation*}
V_{0} \otimes \cdots V_{1} \cdots \otimes V_{0}
\end{equation*}
where the term $V_{1}$ occurs, say, in the $i^{th}$ position.

In the first case, $\delta_{C}$ is clearly a weight vector since it is
invariant under $L$. Consider now the second case, and let $\vec{t}%
=(t_{1},...,t_{n}) \in T_{L}$. Then
\begin{eqnarray*}
\Phi(\vec{t})(\delta_{C}) &=& \Phi((t_{i},...,t_{i}))(\delta_{C}) \\
&=& t_{i}.\delta_{C}
\end{eqnarray*}
where the second equality follows since $Res_{SL_{2}}^{L}(\Phi)$ is the
natural action of $SL_{2}$ on $\B$ (by Theorem \ref{ExtThm}).
So now it suffices to see that $\delta_{C}$ is a weight vector under the
natural torus action of $T_{SL_{2}}$ on $\B$. Indeed, for $t
\in T_{SL_{2}}$ we have
\begin{eqnarray*}
t.\delta_{I_{i}} &=& \delta_{I_{i}} \\
t.\delta_{J_{j}} &=& t\delta_{J_{j}} \\
t.\delta_{J^{\prime}_{j}} &=& t^{-1}\delta_{J^{\prime}_{j}} \\
t.\delta_{K_{k}} &=& \delta_{K_{k}}
\end{eqnarray*}
where $1\leq i\leq n-1,$ $0\leq j\leq n-1$\ and $0\leq k\leq n-2$.

We've shown so far that for any $C \in \mathcal{L}$, $\delta_{C}$ is a
weight vector for the action of $T_{L}$ on $\B$. We now show
this is true for any standard monomial $\delta_{\Delta}=\delta_{C_{1}}
\cdots \delta_{C_{r}}$. Indeed, by Lemma \ref{mainlemma}, the column indices
$\{ C_{1},...,C_{r} \}$ satisfy a common order type. Then by Theorem \ref%
{ExtThm}(2), for any $l \in L$
\begin{equation*}
\Phi(l)(\delta_{\Delta}) = \Phi(l)(\delta_{C_{1}}) \cdots
\Phi(l)(\delta_{C_{r}}).
\end{equation*}
Since each $\delta_{C_{i}}$ is a weight vector for the action of $T_{L}$, it
follows that $\delta_{\Delta}$ is also. By Theorem \ref{StThm} and Corollary %
\ref{ExtCor} we conclude that the standard monomials are the unique (up to
scalar) weight basis of the representation $(\Phi,\B)$ of $L$.
\end{proof}

We remark that our labeling of standard monomials is compatible with the
decomposition of $\B$ appearing in Corollary \ref{ExtCor}.
Indeed, suppose $\delta_{\Delta}$ is a standard monomial. In section \ref%
{sectionASL} we showed how to associate a triple of doubly interlacing Young
diagrams $D_{\Delta}\sqsubseteq  E_{\Delta} \sqsubseteq F_{\Delta}$ to $%
\delta_{\Delta}$. Then we have:
\begin{equation}  \label{EqComp}
\delta_{\Delta} \in \B(D_{\Delta},E_{\Delta},F_{\Delta}).
\end{equation}

\begin{example}
Consider $\delta_{\Delta}$ as in Example \ref{example}. Let $\vec{t}%
=(t_{1},...,t_{4}) \in T_{L}$. By the same reasoning as in the proof of
Theorem \ref{CanThm} we compute that
\begin{equation*}
\Phi(\vec{t})(\delta_{\Delta})=t_{1}^{-1}t_{2}t_{3}^{-1}t_{4}\delta_{\Delta}.
\end{equation*}
On the other hand, using (\ref{EqWt}), it's easy to see that $\mathcal{B}%
(D_{\Delta},E_{\Delta},F_{\Delta})$ is a weight space of $T_{L}$
corresponding to the character $\vec{t} \mapsto
t_{1}^{-1}t_{2}t_{3}^{-1}t_{4}$.
\end{example}
%



\medskip

\section{Toric Degeneration and Hibi Algebra for $\left( {G}_{n},{G}%
_{n-1}\right) $}

In this section, we show that $\B$ can be flatly deformed into
an affine semigroup ring. This will provide another combinatorial
description of branching multiplicity spaces.

\subsection{Flat Deformation}

From Theorem \ref{StThm}, we can realize the
branching algebra $\B$\ as an ASL, i.e., the quotient algebra%
\begin{equation*}
\B\cong \mathbb{C}[z_{C}:C\in \mathcal{L}]/\mathcal{I}
\end{equation*}%
whose defining ideal $\mathcal{I}$ is generated by%
\begin{equation}
\left\{ z_{I_{i}}z_{K_{i-1}}-z_{J_{i}^{\prime
}}z_{J_{i-1}}+z_{J_{i}}z_{J_{i-1}^{\prime }}:1\leq i\leq n-1\right\}
\label{relations2}
\end{equation}

On the other hand, we can define a semigroup ring whose multiplicative
structure is compatible with the lattice structure of $\mathcal{L}$
(cf. \cite{Hi87}).

\begin{definition}
The \textbf{Hibi algebra} $\mathcal{H}$ over $\mathcal{L}$ is
the quotient of the polynomial ring $\mathbb{C}[z_{C}:C\in \mathcal{L}]$ by the ideal $\mathcal{I}_{0}$ generated by%
\begin{equation*}
\left\{ z_{I_{i}}z_{K_{i-1}}-z_{J_{i}^{\prime }}z_{J_{i-1}}:1\leq i\leq
n-1\right\} .
\end{equation*}
\end{definition}

We note that each generator of $\mathcal{I}_{0}$ can be written as $%
z_{I_{i}}z_{K_{i-1}}-z_{I_{i}\wedge K_{i-1}}z_{I_{i}\vee K_{i-1}}$ by (\ref%
{join-meet}), and also it is the first two terms of the longer relation (\ref%
{relations2}) for $\B$. In what follows, by using an analog of
the SAGBI degeneration (\cite[Theorem 1.2]{CHV96}), we show that $\mathcal{B}$
is a flat deformation of $\mathcal{H}$.

\begin{theorem}
\label{Flat Deformation}The branching algebra $\B$ can be
flatly deformed into the Hibi algebra $\mathcal{H}$ over $\mathcal{L}$.
\end{theorem}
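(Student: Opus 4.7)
The plan is to realize $\B$ and $\mathcal{H}$ as the generic and special fibers of an explicit one-parameter family, then verify flatness by a graded dimension count.

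I will introduce a weight function on the variables of $\mathbb{C}[z_C : C \in \mathcal{L}]$ by setting
\begin{equation*}
w(z_{J_j})=0,\qquad w(z_{J_j'})=j,\qquad w(z_{I_i})=i,\qquad w(z_{K_k})=0,
\end{equation*}
chosen so that in each straightening relation (\ref{relations2}) the ``diamond'' monomials $z_{I_i}z_{K_{i-1}}$ and $z_{J_i'}z_{J_{i-1}}$ share weight $i$ while $z_{J_i}z_{J_{i-1}'}$ has the strictly smaller weight $i-1$. Homogenizing with a parameter $t$ of weight $1$ produces the $\mathbb{C}[t]$-algebra
\begin{equation*}
\tilde{\B} = \mathbb{C}[z_C, t]\,/\,\bigl(z_{I_i}z_{K_{i-1}}-z_{J_i'}z_{J_{i-1}} + t\cdot z_{J_i}z_{J_{i-1}'} : 1\le i\le n-1\bigr).
\end{equation*}
Setting $t=0$ gives $\mathcal{H}$ by definition, while for any $t_0\ne 0$ the rescaling $z_C \mapsto t_0^{w(C)}\delta_C$ extends to an algebra isomorphism $\tilde{\B}/(t-t_0)\cong \B$ (a direct check using $w(I_i)+w(K_{i-1})=w(J_i')+w(J_{i-1})=i$ and $w(J_i)+w(J_{i-1}')+1=i$), so every nonzero fiber is $\B$.

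To prove $\tilde{\B}$ is flat over $\mathbb{C}[t]$, observe that all three monomials in each relation share a common $T_n\times T_{n-1}$-weight (Lemma \ref{joint weight vector 2}), so the $\Lambda_{n-1,n}$-grading descends to $\tilde{\B}$ and it suffices to show that each component $\tilde{\B}_{(D,F)}$ is a free $\mathbb{C}[t]$-module of rank $N_{D,F}:=\#\{E : D\sqsubseteq E \sqsubseteq F\}$. The straightening induction of Proposition \ref{spanning}, applied with the deformed identity $z_{I_i}z_{K_{i-1}} = z_{J_i'}z_{J_{i-1}} - t\cdot z_{J_i}z_{J_{i-1}'}$, goes through verbatim and produces an expression of every monomial as a $\mathbb{C}[t]$-linear combination of standard monomials of shape $F/D$; by Proposition \ref{counting} there are exactly $N_{D,F}$ of these. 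The generic rank of $\tilde{\B}_{(D,F)}$ is also $N_{D,F}$, since inverting $t$ gives $\tilde{\B}[1/t]\cong \B\otimes_\mathbb{C} \mathbb{C}[t,t^{-1}]$ via the same rescaling map, and the right-hand side has graded rank $\dim\B(D,F)=N_{D,F}$. A module over the PID $\mathbb{C}[t]$ that is spanned by $N$ elements and has generic rank $N$ is necessarily free of rank $N$, because the kernel of the surjection $\mathbb{C}[t]^N\twoheadrightarrow\tilde{\B}_{(D,F)}$ must be a torsion submodule of a torsion-free module, hence zero. This gives the flatness of $\tilde{\B}$.

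The central obstacle is constructing the weight $w$ with the right tie-breaking behavior: because the subchain $J_i \prec J_i' \prec J_{i-1} \prec J_{i-1}'$ inside $\mathcal{L}$ forces the pairs $\{J_i, J_{i-1}'\}$ and $\{J_i', J_{i-1}\}$ to have identical rank sums, no rank-based weight can single out the third monomial of (\ref{relations2}), and the asymmetric assignment above (loading positive weight onto $I_i$ and $J_j'$ while zeroing out $K_k$ and $J_j$) is what breaks the symmetry. Once the weight is in hand the argument is a specialization of the SAGBI-style degeneration of \cite[Theorem~1.2]{CHV96} to the ASL framework furnished by Theorem \ref{StThm}.
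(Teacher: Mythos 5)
Your proof is correct, and while it rests on the same underlying idea as the paper's --- choose a weight that ties $z_{I_i}z_{K_{i-1}}$ with $z_{J_i'}z_{J_{i-1}}$ while pushing $z_{J_i}z_{J_{i-1}'}$ to lower order --- the technical execution is genuinely different. The paper works intrinsically on $\B$: it assigns the lexicographic-style weight $wt(C)=\sum_r c_r N^{n-r}$ to column sets, filters $\B$ by weight using the standard monomial basis, and forms the Rees algebra, getting flatness for free from general properties of Rees algebras; the identification of the special fiber with $\mathcal{H}$ is then done by checking the Hibi relations hold in the associated graded ring (a step that implicitly also needs the matching count of standard monomials). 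You instead work on the presentation side: starting from $\B\cong\mathbb{C}[z_C]/\mathcal{I}$ (which the paper also invokes at the start of Section 5 as a consequence of Theorem \ref{StThm}), you homogenize the defining relations with a much smaller integer weight on the variables, so the special fiber is $\mathcal{H}$ literally by definition, and you pay for flatness by hand via the $\Lambda_{n-1,n}$-homogeneity of the relations, the deformed straightening span, Proposition \ref{counting}, and the torsion argument over the PID $\mathbb{C}[t]$. Your version makes the special fiber and the flatness verification more explicit, at the cost of leaning on the ideal-theoretic presentation of $\B$ both for the generic fiber and for $\tilde{\B}[1/t]\cong\B\otimes\mathbb{C}[t,t^{-1}]$; the paper's version avoids presentations entirely but leaves the identification of the associated graded ring with $\mathcal{H}$ somewhat compressed. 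Your observation that a rank-based weight on $\mathcal{L}$ cannot separate the three monomials (all three have the same rank sum along the chain of diamonds) is a worthwhile remark that explains why both you and the paper must use an asymmetric weight rather than the generic Hibi-algebra recipe.
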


\begin{proof}
Our goal is to construct a flat $\mathbb{C}[t]$ module $\mathcal{R}$ whose
general fiber is isomorphic to $\B$ and special fiber is
isomorphic to the Hibi algebra $\mathcal{H}$. Let us impose a
filtration on $\B$ by giving the following weight on each
monomial. Fix a large integer $N$ greater than $2n$, and for $%
C=\{c_{1}<\cdots <c_{a}\}\in \mathcal{L}$ we define its weight as%
\begin{equation*}
wt(C)=\sum_{r\geq 1}c_{r}N^{n-r}
\end{equation*}%
Also, we define the weight of $\delta _{C}$ as the weight $wt(C)$ of its
indexing set $C$, and the weight of a product $\prod \delta _{C_{i}}$
as the sum $\sum wt(C_{i})$ of the weights of its factors.

Recall that from Theorem \ref{StThm}, every element in $\B$
can be expressed uniquely as a linear combination of standard monomials $%
\delta _{\Delta }=\prod \delta _{C_{i}}$ with multiple chains $\Delta
=(C_{1},\cdots ,C_{k})$. Set $\mathsf{F}_{d}^{wt}(\B)$ to be
the space spanned by standard monomials whose weights are not less than $d$:%
\begin{equation*}
\left\{ \delta _{\Delta }:wt(\delta _{\Delta })\geq d\right\} .
\end{equation*}%
This filtration $\mathsf{F}^{wt}=\{\mathsf{F}_{d}^{wt}\}$ is well defined
from the following observation: every product $\prod \delta _{C_{i}}$ can be
expressed as a linear combination of standard monomials with bigger weights,
because in the straightening laws (\ref{relations2}) we have
\begin{eqnarray*}
wt(I_{i})+wt(K_{i-1}) &=&wt(J_{i}^{\prime })+wt(J_{i-1}) \\
&=&\sum_{1\leq r\leq i-1}rN^{n-r}+(n+i)N^{n-i}+(n+1)N^{n-i-1} \\
wt(J_{i})+wt(J_{i-1}^{\prime }) &=&\sum_{1\leq r\leq
i-1}rN^{n-r}+(n+i+1)N^{n-i}+nN^{n-i-1}
\end{eqnarray*}%
and therefore for each $i$,
\begin{eqnarray*}
wt(I_{i})+wt(K_{i-1}) &=&wt(J_{i}^{\prime })+wt(J_{i-1}) \\
&<&wt(J_{i})+wt(J_{i-1}^{\prime })
\end{eqnarray*}

Then we can construct the Rees algebra $\mathcal{R}$ of $\B$
with respect to $\mathsf{F}^{wt}$:%
\begin{equation*}
\mathcal{R}=\bigoplus_{d\geq 0}\mathsf{F}_{d}^{wt}(\B)t^{d}
\end{equation*}%
and by the general property of the Rees algebras, it is flat over $\mathbb{C}%
[t]$ with its general fiber isomorphic to $\B$ and special
fiber isomorphic to the associated graded ring.

For all incomparable pairs $\left( A,B\right) =\left( I_{i},K_{i-1}\right)
\in \mathcal{L}$, since $wt(A)+wt(B)=wt(A\wedge B)+wt(A\vee B)$, $%
\delta _{A}\delta _{B}$ and $\delta _{A\wedge B}\delta _{A\vee B}$ belong to
the same associated graded component of $\mathcal{R}$. Therefore, we have $%
y_{A}\cdot _{gr}y_{B}=y_{A\wedge B}\cdot _{gr}y_{A\vee B}$ where $y_{C}$ are
elements corresponding to $\delta _{C}$ in the associated graded ring of $%
\mathcal{R}$. Then it follows that the associated graded ring of $\mathcal{R}
$ is isomorphic to the Hibi algebra $\mathcal{H}$ over $\mathcal{L}$.
\end{proof}

\medskip

\subsection{Affine Semigroup}

Now we want to realize the Hibi algebra $\mathcal{H}$ over $\mathcal{L}$
 as an affine semigroup ring, i.e., a ring generated by a finitely
generated semigroup isomorphic to a subsemigroup of $\mathbb{Z}^{N}$
containing $0$ for some $N$ (cf. \cite[Section 6]{BH98}). Since $\B$ is a
flat deformation of $\mathcal{H}$, we expect that combinatorial
properties of branching rules give rise to the affine semigroup structure of
$\mathcal{H}$.

Let us define the poset $\Gamma $ consisting of $t_{j}^{(i)}$ for $n-1\leq
i\leq n+1$ and $1\leq j\leq \min (i,n)$ which we shall arrange as%
\begin{equation*}
\Gamma =\left\{
\begin{array}{ccccccccc}
t_{1}^{(n+1)} &  & t_{2}^{(n+1)} &  & \cdots &  & t_{n}^{(n+1)} &  &  \\
& t_{1}^{(n)} &  & t_{2}^{(n)} &  & \cdots &  & t_{n}^{(n)} &  \\
&  & t_{1}^{(n-1)} &  & \cdots &  & t_{n-1}^{(n-1)} &  &
\end{array}%
\right\}
\end{equation*}%
with $t_{j}^{(i+1)}\geq t_{j}^{(i)}\geq t_{j+1}^{(i+1)}$ for all $i$
and $j$.

The set $\mathcal{P}(\Gamma )$ of all order preserving maps from $\Gamma $
to non-negative integers forms a monoid generated by the characteristic
functions $\chi _{S}$ on $S=\Gamma /S'$ for order decreasing subsets $S'$ of $\Gamma$%
\begin{equation*}
\chi _{S}(t_{j}^{(i)})=\left\{
\begin{array}{c}
1\text{ if }t_{j}^{(i)}\in S \\
0\text{ if }t_{j}^{(i)}\notin S%
\end{array}%
\right.
\end{equation*}

Furthermore, by imposing the following partial order on the set of order
decreasing subsets of $\Gamma $, we can identify the Hibi algebra $\mathcal{H}$
with the semigroup ring $\mathbb{C}[\mathcal{P}]$\ of $\mathcal{P}=%
\mathcal{P}(\Gamma )$: for two order decreasing subsets $S'_1$ and $S'_2$
of $\Gamma $, we say $S'_1$ is bigger than $S'_2$, if $S'_2 \subseteq
S'_1$ as sets.

\begin{lemma}
There is an order isomorphism between $\mathcal{L}$ and the set of
order decreasing subsets of $\Gamma $.
\end{lemma}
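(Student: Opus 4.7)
My plan is to prove this via Birkhoff's representation theorem. Since $\mathcal{L}$ is a distributive lattice (closed under the meets and joins from (\ref{join-meet})), it is canonically isomorphic to the lattice of downsets of its poset $\mathrm{Irr}(\mathcal{L})$ of join-irreducible elements, so it suffices to identify $\mathrm{Irr}(\mathcal{L})$ with a convenient subposet of $\Gamma$.

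First I would read off the join-irreducibles from the Hasse diagram in Definition \ref{dlattice-def}. The elements $J_j$ for $0 \leq j \leq n-2$ are each the join of the incomparable pair $\{I_{j+1}, K_j\}$ and hence not join-irreducible, while the minimum $J_{n-1}$ is excluded by convention. All other elements cover exactly one element, giving
\[
\mathrm{Irr}(\mathcal{L}) = \{J_j' : 0 \leq j \leq n-1\} \cup \{I_i : 1 \leq i \leq n-1\} \cup \{K_k : 0 \leq k \leq n-2\},
\]
a poset with $3n - 2$ elements.

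Next I would exhibit an explicit order-isomorphism $\mathrm{Irr}(\mathcal{L}) \to \Gamma \setminus \{t_1^{(n+1)}\}$ by sending $J_j' \mapsto t_{j+1}^{(n)}$, $I_i \mapsto t_{i+1}^{(n+1)}$, and $K_k \mapsto t_{k+1}^{(n-1)}$. The remaining cover relations of $\mathrm{Irr}(\mathcal{L})$ (namely $J_i' \prec I_i$, $J_i' \prec K_{i-1}$, $I_i \prec J_{i-1}'$, and $K_{i-1} \prec J_{i-1}'$, obtained from the hexagonal pieces of $\mathcal{L}$ by collapsing the non-join-irreducibles $J_j$) translate precisely to the covers in $\Gamma$ encoded by the interlacing inequalities $t_j^{(i+1)} \geq t_j^{(i)} \geq t_{j+1}^{(i+1)}$. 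This verification reduces to a short case analysis on the four types of covers.

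Finally, $t_1^{(n+1)}$ is the unique maximum of $\Gamma$ (every other element lies strictly below it by iterated application of the defining inequalities), so any downset of $\Gamma$ either contains $t_1^{(n+1)}$ and thus equals $\Gamma$, or lies entirely within $\Gamma \setminus \{t_1^{(n+1)}\}$. Hence downsets of $\Gamma \setminus \{t_1^{(n+1)}\}$ correspond bijectively to proper downsets of $\Gamma$, and the Birkhoff isomorphism yields the desired result. The main obstacle is the minor conventional mismatch $|\mathcal{L}| = 4n - 2$ versus the $4n - 1$ downsets of $\Gamma$; the extra downset $\Gamma$ itself corresponds to the identity element $1 \in \mathcal{H}$, which is not listed among the generators $\{z_C : C \in \mathcal{L}\}$.
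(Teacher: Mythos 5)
Your proof is correct, but it takes a genuinely different route from the paper's. The paper constructs the isomorphism explicitly: to each $C\in\mathcal{L}$ it assigns the order decreasing subset $S'_C$ whose complement meets the row $t^{(k)}_{\bullet}$ of $\Gamma$ in its first $m(k)$ entries, where $m(k)$ is the number of entries of $C$ that are $\leq k$, and it leaves the check that this is an order isomorphism as a routine computation (Birkhoff's theorem is mentioned only afterwards, as context). You instead use Birkhoff as the engine: you compute $\mathrm{Irr}(\mathcal{L})$, match it with $\Gamma\setminus\{t_1^{(n+1)}\}$, and let the representation theorem do the rest. Your identification of the join-irreducibles is right ($J_j=I_{j+1}\vee K_j$ is reducible for $0\leq j\leq n-2$, $J_{n-1}$ is the minimum, and every other element covers a unique element), and your assignment $J_j'\mapsto t_{j+1}^{(n)}$, $I_i\mapsto t_{i+1}^{(n+1)}$, $K_k\mapsto t_{k+1}^{(n-1)}$ does carry the four cover types of $\mathrm{Irr}(\mathcal{L})$ exactly onto the interlacing covers $t_j^{(n+1)}\geq t_j^{(n)}\geq t_{j+1}^{(n+1)}$ and $t_j^{(n)}\geq t_j^{(n-1)}\geq t_{j+1}^{(n)}$; unwinding Birkhoff's bijection $C\mapsto\{p\in\mathrm{Irr}(\mathcal{L}):p\preceq C\}$ recovers precisely the paper's map $C\mapsto S'_C$. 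Your approach makes the ``order isomorphism'' claim essentially automatic once the join-irreducibles are matched, while the paper's explicit description is what is actually used in the subsequent proposition (where the characteristic functions $\chi_{S_C}$ are needed), so the two are complementary. You also correctly flag a point the paper glosses over: $\Gamma$ has $4n-1$ order decreasing subsets while $|\mathcal{L}|=4n-2$, and the paper's own map never hits the downset $\Gamma$ itself (every $S_C$ contains $t_1^{(n+1)}$ since $C\neq\emptyset$); the statement should be read as referring to proper order decreasing subsets, which is exactly what your restriction to $\Gamma\setminus\{t_1^{(n+1)}\}$ yields, the omitted downset corresponding to the zero map in $\mathcal{P}(\Gamma)$, i.e.\ to $1\in\mathcal{H}$.
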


This is an easy computation similar to \cite[Theorem 3.8]{Km08}. Let us
specify this isomorphism. For each $C\in \mathcal{L}$, we define the
complement $S_{C}$ of the corresponding order decreasing subset $S'_{C}$ of $\Gamma $ as the union of%
\begin{equation*}
S_{C}^{(k)}=\left\{ t_{1}^{(k)},t_{2}^{(k)},\cdots ,t_{m(k)}^{(k)}\right\}
\end{equation*}%
for $n-1\leq k\leq n+1$ where $m(k)$ is the number of entries in $C$ less
than or equal to $k$. It is straightforward to check that this
correspondence gives an order isomorphism. This, in fact, is an example of
Birkhoff's representation theorem or the fundamental theorem for finite
distributive lattices (\cite[Theorem 3.4.1]{Sta97}). See also the example below.

\begin{example}\label{char-func}
Let us consider the following elements from the distributive lattice $\mathcal{L}$ for $(G_4,G_3)$: $A=[1,2,4,5]$, $B=[1,2,5]$, and $C=[1,4]$. Then the corresponding character functions can be visualized as, by identifying them with their values at $t^{(b)}_a \in \Gamma$,
\begin{eqnarray*}
\chi_{S_A} &=& \left\{
\begin{array}{cccccccc}
1& &1& &1& &1&  \\
 &1& &1& &1& &0 \\
 & &1& &1& &0&
\end{array}%
\right\} \\
\chi_{S_B} &=& \left\{
\begin{array}{cccccccc}
1& &1& &1& &0&  \\
 &1& &1& &0& &0 \\
 & &1& &1& &0&
\end{array}%
\right\} \\
\chi_{S_C} &=& \left\{
\begin{array}{cccccccc}
1& &1& &0& &0&  \\
 &1& &1& &0& &0 \\
 & &1& &0& &0&
\end{array}%
\right\}
\end{eqnarray*}
Note that for $T=A,B,C$, the number of $1$'s in the first, second, and third row of $\chi_T$ is the number of entries less than or equal to $5$, $4$, and $3$ in $T$ respectively. The order $A \preceq B\preceq C$ can be related to the inclusion order on order decreasing subsets $S'_T = \chi^{-1}_{S_T} (0)$ of $\Gamma$:
$$\chi^{-1}_{S_A} (0) \subseteq  \chi^{-1}_{S_B} (0)\subseteq  \chi^{-1}_{S_C} (0).$$
\end{example}

\medskip

\begin{proposition}
There is a bijection between the set of multiple chains in $\mathcal{L}$
and the set of order preserving maps from $\Gamma $ to non-negative
integers.
\end{proposition}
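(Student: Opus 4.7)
The plan is straightforward: use the order isomorphism $C \leftrightarrow S'_C$ of the preceding lemma to convert chains in $\mathcal{L}$ into nested families of up-sets in $\Gamma$, and then convert such nested families into $\mathbb{Z}_{\geq 0}$-valued order preserving functions via the standard level-set construction. This is essentially Birkhoff's representation theorem applied one step beyond the previous lemma.

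First I would restate the order isomorphism in terms of characteristic functions. Setting $S_C = \Gamma \setminus S'_C$, the set $S_C$ is an up-set in $\Gamma$, and $\chi_{S_C}$ is an order preserving $\{0,1\}$-valued function on $\Gamma$. Under the correspondence established by the preceding lemma and the convention on the partial order of order decreasing subsets ($S'_1 \geq S'_2$ iff $S'_2 \subseteq S'_1$), one has $C_1 \preceq C_2$ in $\mathcal{L}$ iff $S'_{C_1} \subseteq S'_{C_2}$, equivalently $S_{C_2} \subseteq S_{C_1}$, equivalently $\chi_{S_{C_1}} \geq \chi_{S_{C_2}}$ pointwise on $\Gamma$ (the example in \ref{char-func} provides a direct sanity check of this order reversal).

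Next, given a multiple chain $\Delta = (C_1 \preceq \cdots \preceq C_r)$ in $\mathcal{L}$, I would define
\begin{equation*}
\Phi(\Delta) = \sum_{i=1}^{r} \chi_{S_{C_i}}.
\end{equation*}
Since each summand is order preserving and $\mathbb{Z}_{\geq 0}$-valued, $\Phi(\Delta) \in \mathcal{P}(\Gamma)$. For the inverse, given $f \in \mathcal{P}(\Gamma)$ with $r = \max_{x \in \Gamma} f(x)$, form the level sets
\begin{equation*}
S^{(k)} = \{ x \in \Gamma : f(x) \geq k \}, \qquad 1 \leq k \leq r.
\end{equation*}
Because $f$ is order preserving, each $S^{(k)}$ is an up-set of $\Gamma$; moreover $S^{(1)} \supseteq S^{(2)} \supseteq \cdots \supseteq S^{(r)}$. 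By the preceding lemma, there exist unique $C_k \in \mathcal{L}$ with $S_{C_k} = S^{(k)}$, and the nesting translates into the chain condition $C_1 \preceq C_2 \preceq \cdots \preceq C_r$. Set $\Psi(f) = (C_1 \preceq \cdots \preceq C_r)$.

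Finally I would check that $\Phi$ and $\Psi$ are mutually inverse. The layer-cake identity $f = \sum_{k=1}^{r} \chi_{S^{(k)}}$ immediately gives $\Phi \circ \Psi = \mathrm{id}$. Conversely, for $\Delta = (C_1 \preceq \cdots \preceq C_r)$, the nesting $\chi_{S_{C_1}} \geq \cdots \geq \chi_{S_{C_r}}$ implies that the $k$-th level set of $\Phi(\Delta)$ is exactly $S_{C_k}$, so $\Psi \circ \Phi = \mathrm{id}$. Since the substantive work — the identification of $\mathcal{L}$ with the up-sets of $\Gamma$ — was already done in the preceding lemma, the only real obstacle here is bookkeeping the order reversal when passing between $S'_C$ and its complement $S_C$; once that is fixed the argument is a direct application of the finite distributive lattice formalism.
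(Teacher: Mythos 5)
Your proposal is correct and follows essentially the same route as the paper: both send a chain $\Delta=(C_1\preceq\cdots\preceq C_r)$ to the sum $\sum_i \chi_{S_{C_i}}$ of the characteristic functions supplied by the preceding lemma. The only difference is in how bijectivity is verified — you construct the explicit inverse via level sets $\{x : f(x)\geq k\}$ and the layer-cake identity, while the paper instead identifies $p(\Delta)$ with the interlacing pattern $(F,E,D)$ attached to the tableau $\Delta$ and defers the details to the references; your version is the more self-contained of the two.
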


\begin{proof}
The bijection in the above lemma provides the bijection between $\mathcal{L}$
and the set of characteristic functions on the complements of order increasing subsets
of $\Gamma $. This map can be extended to the multiple chains in $\mathcal{L}$
as follows. Let $\Delta =(C_{1}\preceq \cdots \preceq C_{c})$ be a
multiple chain in $\mathcal{L}$ and for each $i$, let $\chi _{i}$ be
the characteristic function on $S_{C_{i}}$
corresponding to $C_{i}$ given in the above lemma. Then we can consider
the following correspondence:%
\begin{equation}
\Delta =(C_{1}\preceq \cdots \preceq C_{c})\mapsto p(\Delta
)=\sum\limits_{r=1}^{c}\chi _{i}.  \label{adding procedure}
\end{equation}

Recall that starting from $\Delta $, by considering Young diagrams containing entries less than or equal to $n-1$,$n$, and $n+1$ we obtain Young diagrams $D$,$E$, and $F$ respectively with $D\sqsubseteq E \sqsubseteq F$. Now let us consider the following order preserving map $p:\Gamma \rightarrow \mathbb{Z}_{\geq 0}$%
\begin{eqnarray*}
p(t_{i}^{(n+1)}) &=&f_{i} \\
p(t_{i}^{(n)}) &=&e_{i} \\
p(t_{k}^{(n-1)}) &=&d_{k}
\end{eqnarray*}%
for $1\leq i\leq n,1\leq k\leq n-1$. Then from the construction of $\chi _{i}
$ corresponding to $C_{i}$, one can check that $p=p(\Delta )$ and
this correspondence gives a bijection. For further details, see \cite{Km08}%
\cite{How05}. See also the example below.
\end{proof}

\begin{example}
Let us consider the chain $A \preceq B \preceq C$ given by the elements in Example \ref{char-func}. By concatenating them we have the semistandard tableau $\Delta$ of the shape $F/D=(3,3,2,1)/(3,2)$:
\begin{equation*}
\young(111,224,45,5)
\end{equation*}
Then the corresponding order preserving map given in the above proof is 
$\chi_{S_A}+\chi_{S_B}+\chi_{S_C}$: 
\begin{equation*}
p(\Delta) = \left\{
\begin{array}{cccccccc}
3& &3& &2& &1&  \\
 &3& &3& &1& &0 \\
 & &3& &2& &0&
\end{array}
\right\}
\end{equation*}

Note that the first row $(3,3,2,1)$ corresponds to the Young diagram $F$ of the tableau $\Delta$. By erasing the boxes with $5$ in the tableau, we obtain the Young diagram corresponding to the second row $E=(3,3,1,0)$, and finally by erasing the boxes with $4$, we obtain the Young diagram $D$ corresponding to the third row $(3,2,0)$. 
\end{example}

In fact, from the multiplicative structure given in (\ref{adding procedure}),
this bijection is a semigroup isomorphism. Therefore, we have

\begin{corollary}
The Hibi algebra $\mathcal{H}$ over $\mathcal{L}$ is
isomorphic to the semigroup ring $\mathbb{C}[\mathcal{P}]$\ generated by $%
\mathcal{P}=\mathcal{P}(\Gamma )$.
\end{corollary}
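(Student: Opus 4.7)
The plan is to build the isomorphism $\mathcal{H} \to \mathbb{C}[\mathcal{P}]$ on generators and then use the combinatorial bijection of the preceding proposition to conclude it is a $\mathbb{C}$-linear isomorphism. First I would define a $\mathbb{C}$-algebra homomorphism
\begin{equation*}
\Psi : \mathbb{C}[z_C : C \in \mathcal{L}] \longrightarrow \mathbb{C}[\mathcal{P}], \qquad z_C \longmapsto \chi_{S_C},
\end{equation*}
where $\chi_{S_C} \in \mathcal{P}$ is the characteristic function attached to $C$ in the lemma. The first task is to check that $\Psi$ descends to a homomorphism out of $\mathcal{H}$, i.e. that $\Psi$ kills the Hibi ideal $\mathcal{I}_0$ generated by $z_{I_i}z_{K_{i-1}} - z_{J_i'}z_{J_{i-1}}$. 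Since the multiplication in $\mathbb{C}[\mathcal{P}]$ is pointwise addition of functions on $\Gamma$, this amounts to the identity
\begin{equation*}
\chi_{S_{I_i}} + \chi_{S_{K_{i-1}}} = \chi_{S_{J_i'}} + \chi_{S_{J_{i-1}}}
\end{equation*}
in $\mathcal{P}$, and in view of the join/meet formulas $I_i \wedge K_{i-1} = J_i'$ and $I_i \vee K_{i-1} = J_{i-1}$ from \eqref{join-meet}, this is the general identity $\chi_{S} + \chi_{S'} = \chi_{S \cap S'} + \chi_{S \cup S'}$ for the indicator functions of the complements, evaluated on the pair of order-decreasing subsets corresponding to $I_i$ and $K_{i-1}$. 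This is a direct pointwise check on $\Gamma$.

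Next I would show $\Psi$ is surjective by observing that every element of $\mathcal{P}$ is a sum of characteristic functions $\chi_{S_C}$ for $C \in \mathcal{L}$; this is exactly the content of the bijection \eqref{adding procedure}, since under that correspondence a multiple chain $\Delta = (C_1 \preceq \cdots \preceq C_c)$ is sent to $\sum_i \chi_{S_{C_i}}$, and every order-preserving map from $\Gamma$ to $\mathbb{Z}_{\geq 0}$ arises in this way.

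For injectivity I would use the standard fact (Hibi's theorem, see \cite{Hi87}) that the Hibi algebra $\mathcal{H}$ has a $\mathbb{C}$-basis consisting of standard monomials $z_{C_1} \cdots z_{C_c}$ indexed by multiple chains $\Delta = (C_1 \preceq \cdots \preceq C_c)$ in $\mathcal{L}$; indeed $\mathcal{H}$ is by construction the ``abelianization'' of the straightening law retaining only the leading term, and the same spanning/counting argument used for $\B'$ in Proposition \ref{spanning} applies verbatim after replacing \eqref{relations2} with its leading relation. Under $\Psi$ a standard monomial $z_{C_1} \cdots z_{C_c}$ maps to $\sum_{i=1}^c \chi_{S_{C_i}} = p(\Delta)$, which is exactly the image of $\Delta$ under the bijection \eqref{adding procedure}. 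Thus $\Psi$ sends a basis of $\mathcal{H}$ bijectively onto the monomial basis of $\mathbb{C}[\mathcal{P}]$, so it is an isomorphism.

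The only mildly subtle point is the one I expect to carry the proof: confirming that the straightening law in $\mathcal{H}$ really does force standard monomials to be a basis (and not merely a spanning set), so that $\Psi$ can be verified to be injective basis-to-basis rather than just checked to be surjective. Once the basis statement for $\mathcal{H}$ is in hand, everything else reduces to the pointwise join/meet identity and the bijection already established in the preceding proposition.
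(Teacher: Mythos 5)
Your proof is correct and is essentially the paper's argument (which merely asserts that the bijection of the preceding proposition is a semigroup isomorphism compatible with the multiplicative structure) written out in full as an explicit algebra homomorphism. The one point worth adding is that your flagged ``subtle point'' is not actually needed as an external input: once standard monomials span $\mathcal{H}$ (by the straightening argument) and $\Psi$ sends distinct standard monomials to distinct basis elements of $\mathbb{C}[\mathcal{P}]$ (by the bijection), their linear independence follows automatically, so $\Psi$ is an isomorphism without having to quote Hibi's basis theorem from \cite{Hi87}.
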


Consequently, $Spec(\mathcal{H})$ is an affine toric variety in the
sense of \cite{St95, MS05}, and Theorem \ref{Flat Deformation} shows that $Spec(\B)$ is a toric deformation of $Spec(\mathcal{H})$.

\medskip\

\end{document}